\tikzset{snaked/.style = {decorate, decoration=snake}}
\tikzset{sdot/.style = {fill, circle, inner sep = 1.5pt}}
\colorlet{myblue}{Blue4!70!DarkSlateGray3}
\colorlet{mypurple}{Purple4!70!Blue4}
\newtheorem{theorem}{Theorem}[section]
\newtheorem{lemma}[theorem]{Lemma}
\newtheorem{claim}[theorem]{Claim}
\newtheorem*{theorem*}{Theorem}
\newtheorem*{corollary*}{Corollary}
\newtheorem*{lemma*}{Lemma}
\newtheorem*{prop*}{Proposition}
\newtheorem*{fact*}{Fact}
\newtheorem*{claim*}{Claim}
\theoremstyle{definition}
\newtheorem*{example*}{Example}
\newtheorem*{defn*}{Definition}
\newtheorem*{remark*}{Remark}
\numberwithin{equation}{section}
\newcommand{\sceil}[1]{\lceil #1 \rceil}
\newcommand{\sdbold}[1]{\textbf{\textsf{#1}}}
\newcommand{\abs}[1]{\left\lvert #1 \right\rvert}
\newcommand{\sabs}[1]{\lvert #1 \rvert}
\newcommand{\ora}[1]{\overrightarrow{#1}}
\newcommand{\NN}{\mathbb N}
\newcommand{\RR}{\mathbb R}
\newcommand{\cC}{\mathcal C}
\newcommand{\cI}{\mathcal I}
\newcommand{\cP}{\mathcal P}
\newcommand{\cR}{\mathcal R}
\newcommand{\cV}{\mathcal V}
\newcommand{\cZ}{\mathcal Z}
\newcommand{\eps}{\varepsilon}
\newcommand{\case}[2]{\textcolor{Purple4!70!Blue4}{\sdbold{Case #1} (#2).}}
\newcommand{\deftext}[1]{\emph{#1}}
\newcommand{\Strip}{\mathsf{Strip}}
\newcommand{\sfE}{\mathsf{E}}
\title{Distinct distances between a line and strip}
\author{Sanjana Das}
\address{Department of Mathematics, Massachusetts Institute of Technology, MA, USA}
\email{\href{mailto:sanjanad@mit.edu}{sanjanad@mit.edu}}
\author{Adam Sheffer}
\address{Department of Mathematics, Baruch College, City University of New York, NY, USA}
\email{\href{mailto:adamsh@gmail.com}{adamsh@gmail.com}}
\date{April 6, 2025}
\begin{document}

\begin{abstract}
    We introduce a new type of distinct distances result: a lower bound on the number of distances between points on a line and points on a two-dimensional strip. This can be seen as a generalization of the well-studied problems of distances between points on two lines or curves. Unlike these existing problems, this new variant only makes sense if the points satisfy an additional spacing condition. 

    Our work can also be seen as an exploration of the proximity technique that was recently introduced by Solymosi and Zahl. This technique lies at the heart of our analysis. 
\end{abstract}

\maketitle

\section{Introduction} \label{sec:intro}

\subsection{Background} \label{subsec:background}

The study of distinct distances began with the following question of Erd\H{o}s \cite{Erd46}: What is the minimum possible number of distinct distances that $n$ points in $\RR^2$ can span? For example, $n$ equally spaced points on a line span $n - 1$ distinct distances. In \cite{Erd46}, Erd\H{o}s showed that it is possible to do a little better --- a $\sqrt{n} \times \sqrt{n}$ lattice spans $\Theta(n/\sqrt{\log n})$ distances. He also proved that any $n$ points span $\Omega(n^{1/2})$ distances. Since then, this lower bound has seen a series of improvements; finally, Guth and Katz \cite{GK15} proved that any $n$ points span $\Omega(n/\log n)$ distances, which resolves the problem up to a factor of $\sqrt{\log n}$. 

While the above problem is nearly resolved, it is just one out of many distinct distances problems that Erd\H{o}s introduced, and most other variants are far from understood. One interesting variant of this problem, posed by Purdy (see \cite[Section 5.5]{BPS05}), is as follows: For two lines $\ell_1$ and $\ell_2$ and two sets of $n$ points $\cP_1 \subseteq \ell_1$ and $\cP_2 \subseteq \ell_2$, what is the minimum number of distinct distances between points in $\cP_1$ and points in $\cP_2$? 

For two points $a, p \in \RR^2$, we write $\abs{ap}$ to denote the distance between $a$ and $p$. For two sets of points $\cP_1, \cP_2 \subseteq \RR^2$, we write \[\Delta(\cP_1, \cP_2) = \{\abs{ap} \mid a \in \cP_1, \, p \in \cP_2\}\] to denote the set of distances between a point in $\cP_1$ and a point in $\cP_2$. Then Purdy's question asks for the minimum value of $\abs{\Delta(\cP_1, \cP_2)}$ over all $n$-element sets $\cP_1 \subseteq \ell_1$ and $\cP_2 \subseteq \ell_2$. 

If $\ell_1$ and $\ell_2$ are parallel or orthogonal, the answer to this question is $\Theta(n)$: If $\ell_1$ and $\ell_2$ are parallel, then we can take $n$ equally spaced points on each line. If $\ell_1$ and $\ell_2$ are orthogonal, then taking them to be the $x$-axis and $y$-axis of a coordinate system, we can take $\cP_1 = \{(\sqrt{i}, 0) \mid i \in [n]\}$ and $\cP_2 = \{(0, \sqrt{i}) \mid i \in [n]\}$.

Purdy \cite[Section 5.5]{BPS05} conjectured that these special cases are the \emph{only} ones where the answer is $\Theta(n)$ --- more precisely, he conjectured that if $\ell_1$ and $\ell_2$ are not parallel or orthogonal, then we must have $\abs{\Delta(\cP_1, \cP_2)} = \omega(n)$. This was proven by Elekes and R\'onyai \cite{ER00}. Since then, there has been a series of quantitative improvements on the lower bound: 
\begin{itemize}
    \item Elekes \cite{Ele99} proved that $\abs{\Delta(\cP_1, \cP_2)} = \Omega(n^{5/4})$.
    \item Sharir, Sheffer, and Solymosi \cite{SSS13} proved that $\abs{\Delta(\cP_1, \cP_2)} = \Omega(n^{4/3})$. More generally, they showed that if $\abs{\cP_1} = m$ and $\abs{\cP_2} = n$, then 
    \begin{equation}
        \abs{\Delta(\cP_1, \cP_2)} = \Omega(\min\{m^{2/3}n^{2/3}, m^2, n^2\}). \label{eqn:sss13}
    \end{equation}
    
    \item Recently, Solymosi and Zahl \cite{SZ24} improved this to 
    \begin{equation}
        \abs{\Delta(\cP_1, \cP_2)} = \Omega(\min\{m^{3/4}n^{3/4}, m^2, n^2\}). \label{eqn:sz24}
    \end{equation}
    (In the case $m = n$, this gives $\abs{\Delta(\cP_1, \cP_2)} = \Omega(n^{3/2})$.) 
\end{itemize} 

The main idea behind the proof of \eqref{eqn:sss13} is to consider the \deftext{distance energy} 
\begin{equation}
    \sfE(\cP_1, \cP_2) = \abs{\{(a, b, p, q) \in \cP_1^2 \times \cP_2^2 \mid \abs{ap} = \abs{bq}\}}. \label{eqn:ordinary-energy}
\end{equation} 
If $\abs{\Delta(\cP_1, \cP_2)}$ is small, then $\sfE(\cP_1, \cP_2)$ must be large; so it suffices to prove an \emph{upper} bound on $\sfE(\cP_1, \cP_2)$. Sharir, Sheffer, and Solymosi did so using incidence geometry: They set up a collection of points and curves in $\RR^2$, with one point for each $(a, b) \in \cP_1^2$ and one curve for each $(p, q) \in \cP_2^2$, such that $\sfE(\cP_1, \cP_2)$ is equal to the number of incidences between these points and curves. They then used incidence bounds to obtain an upper bound on $\sfE(\cP_1, \cP_2)$, and therefore a lower bound on $\abs{\Delta(\cP_1, \cP_2)}$. 

To obtain the improvement in \eqref{eqn:sz24}, Solymosi and Zahl introduced the technique of \emph{proximity}. Instead of working directly with the energy $\sfE(\cP_1, \cP_2)$ as defined in \eqref{eqn:ordinary-energy}, they worked with a `proximity-restricted' variant where we also require that $a$ be close to $b$ and that $p$ be close to $q$. The intuition behind why this leads to a better bound is that when setting up an incidence problem, we use pairs $(a, b) \in \cP_1^2$ to define points and pairs $(p, q) \in \cP_2^2$ to define curves; so these proximity conditions shrink the number of points and curves, and therefore the upper bound on the energy that we get from incidence bounds. 
    
These proximity conditions also shrink the energy itself. But loosely speaking, the idea is that among quadruples $(a, b, p, q)$ with $\abs{ap} = \abs{bq}$, the conditions that $a$ be close to $b$ and that $p$ be close to $q$ are very `well-correlated,' so that imposing \emph{both} proximity conditions does not shrink our lower bound on the energy in terms of $\abs{\Delta(\cP_1, \cP_2)}$ by much more than imposing just \emph{one} condition would. This means the proximity conditions shrink the upper bound on the energy by more than they shrink the lower bound in terms of $\abs{\Delta(\cP_1, \cP_2)}$, resulting in a stronger lower bound for $\abs{\Delta(\cP_1, \cP_2)}$. 

The above bounds show that the problem of distinct distances between two lines behaves quite differently in the cases where $\ell_1$ and $\ell_2$ are parallel or orthogonal and where they are not. However, the current best lower bound for the latter is still quite far from the best construction we know of, which has $\abs{\Delta(\cP_1, \cP_2)} = \Theta(n^2/\sqrt{\log n})$. This construction takes $\ell_1$ to be the $x$-axis and $\ell_2$ to be the line $y = x$, and sets $\cP_1 = \{(i, 0) \mid i \in [n]\}$ and $\cP_2 = \{(i, i) \mid i \in [n]\}$. This is illustrated in Figure \ref{fig:two-lines-construction}.

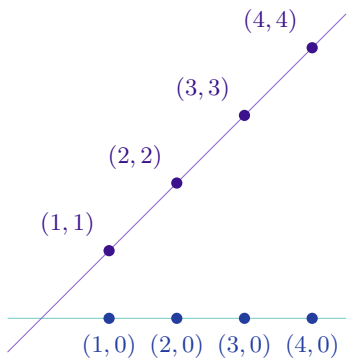
\begin{figure}[ht]
    \centering
    \begin{tikzpicture}[scale = 0.9]
        \draw [DarkSlateGray3] (-0.5, 0) -- (4.5, 0);
        \draw [MediumPurple3] (-0.5, -0.5) -- (4.5, 4.5);
        \foreach \i in {1, ..., 4} {
            \node [sdot, myblue, label = {below, font = \footnotesize, myblue}: {$(\i, 0)$}] at (\i, 0) {};
            \node [sdot, mypurple, label = {above left, font = \footnotesize, mypurple}: {$(\i, \i)$}] at (\i, \i) {};
        }
    \end{tikzpicture}
    \caption{A construction with $\Theta(n^2/\sqrt{\log n})$ distances.} \label{fig:two-lines-construction}
\end{figure}

Elekes \cite{Ele99} conjectured that this upper bound is closer to the truth --- specifically, he conjectured that for every $\eps > 0$, we must have $\abs{\Delta(\cP_1, \cP_2)} = \Omega(n^{2 - \eps})$.

\subsection{Our problem and main result}

We show how the proximity approach can be used to study a generalization of the problem of distinct distances between two lines. We still require $\cP_1$ to lie on a line, but we only require $\cP_2$ to lie in a two-dimensional \emph{strip}. To formalize this, for a curve $\gamma \subseteq \RR^2$ and $w > 0$, we define the \deftext{width-$w$ strip around $\gamma$} as the set \[\Strip_w(\gamma) = \{p \in \RR^2 \mid \text{exists $p^* \in \gamma$ with $\sabs{pp^*} \leq w$}\}.\] We fix two lines $\ell_1$ and $\ell_2$ and a parameter $w > 0$ (which we think of as a constant); we wish to understand the minimum possible value of $\abs{\Delta(\cP_1, \cP_2)}$ for point sets $\cP_1 \subseteq \ell_1$ and $\cP_2 \subseteq \Strip_w(\ell_2)$ of given sizes $m$ and $n$. (See Figure \ref{fig:problem-illustration}.)

For the condition that $\cP_2$ lies in a strip to be meaningful, we need to impose a spacing condition on the points. Otherwise, this problem would be equivalent to one where $\cP_1$ is restricted to a line but $\cP_2$ is allowed to be arbitrary --- one could take any configuration of points where $\cP_1$ lies on a line, shrink it down, and place it in a small region around the intersection of $\ell_1$ and $\ell_2$. So for $u > 0$, we say a collection of points $\cP$ is \deftext{$u$-spaced} if the distance between any two distinct points in $\cP$ is at least $u$. We will require both $\cP_1$ and $\cP_2$ to be $u$-spaced for some constant $u > 0$. 

\begin{figure}[ht]
    \centering 
    \begin{minipage}{0.47\textwidth}
        \centering
        \begin{tikzpicture}[scale = 0.7]
            \begin{scope}[rotate = 30]
                \fill [MediumPurple3!15, rounded corners = 0.3cm] (-1.8, -0.3) rectangle (5.3, 0.3);
            \end{scope}
            \draw [DarkSlateGray3] (-1.5, 0) -- (5, 0);
            \draw [MediumPurple3] (210:1.5) -- (30:5);
            \foreach \i in {1, 2.5, 3, 3.35, 3.95} {
                \node [sdot, myblue] at (\i, 0) {};
            }
            \foreach \i\j\k in {1.2/90/0.2, 2/285/0.15, -0.75/270/0.2, 3.5/90/0.18, 4.25/280/0.1} {
                \node [sdot, mypurple] at ($(30:\i) + (\j:\k)$) {};
            }
        \end{tikzpicture}

        \caption{We still require $\cP_1$ to lie on $\ell_1$, but we only require $\cP_2$ to lie on a strip around $\ell_2$.} \label{fig:problem-illustration}
    \end{minipage}
    \begin{minipage}{0.47\textwidth}
        \centering
        \begin{tikzpicture}[scale = 0.8]
            \begin{scope}[rotate = 30]
                \fill [MediumPurple3!15, rounded corners = 0.3cm] (-2.3, -0.3) rectangle (3.3, 0.3);
            \end{scope}
            \draw [DarkSlateGray3] (-2, 0) -- (3, 0);
            \draw [MediumPurple3] (210:2) -- (30:3);
            \filldraw [gray, fill opacity = 0.05, draw opacity = 0.5] (-0.3, -0.2) rectangle (0.3, 0.2);
            \fill [MediumPurple3!15] (1, -1) rectangle (2.5, -2);
            \draw [DarkSlateGray3] (1, -1.5) -- (2.5, -1.5);
            \foreach \i in {0.25, 0.6, 1, 1.35} {
                \node [sdot, myblue] at (\i + 1, -1.5) {};
            }
            \foreach \i\j in {0.3/0.3, 0.5/-0.27, 1.1/-0.3, 1.2/0.27} {
                \node [sdot, mypurple] at (\i + 1, -1.5 + \j) {};
            }
            \filldraw [gray, fill opacity = 0.05, draw opacity = 0.5] (1, -1) rectangle (2.5, -2);
            \node [rotate = -45, font = {\Huge}, gray!50] at (0.65, -0.6) {$\rightsquigarrow$};
        \end{tikzpicture}
        \caption{An illustration of why some spacing condition is necessary for the problem to be meaningful.} \label{fig:spacing-necessary}
    \end{minipage}
\end{figure}

Our main result is the following lower bound for this problem. 

\begin{theorem} \label{thm:line-vs-strip}
    Fix $u, w > 0$. Let $\ell_1$ and $\ell_2$ be two lines which are not parallel or orthogonal, and let $\cP_1 \subseteq \ell_1$ and $\cP_2 \subseteq \Strip_w(\ell_2)$ be $u$-spaced sets of points with $\abs{\cP_1} = m$ and $\abs{\cP_2} = n$. Then for every $\eps > 0$, we have \[\abs{\Delta(\cP_1, \cP_2)} = \Omega(\min\{m^{14/15}n^{8/15 - \eps}, m^{3/4}n^{3/4}, m^2, n^2\}),\] where the implicit constant depends on $\eps$, $u$, $w$, and the angle between $\ell_1$ and $\ell_2$.  
\end{theorem}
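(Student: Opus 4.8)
The plan is to carry out the proximity method of Solymosi and Zahl, treating the strip as a bounded perturbation of the two‑line case that is kept under control by the spacing hypothesis. Write $D = \abs{\Delta(\cP_1, \cP_2)}$; since the $mn$ distances $\abs{ap}$ take only $D$ values, Cauchy--Schwarz gives
\[
    \sfE(\cP_1, \cP_2) \;=\; \sum_{\delta \in \Delta(\cP_1, \cP_2)} \abs{\{(a,p) \in \cP_1 \times \cP_2 : \abs{ap} = \delta\}}^2 \;\geq\; \frac{m^2 n^2}{D},
\]
so it is enough to prove a matching \emph{upper} bound on a proximity‑restricted version of $\sfE$. Choose coordinates with $\ell_1$ the $x$‑axis and $\ell_1 \cap \ell_2$ at the origin, let $\alpha \in (0, \pi/2)$ be the angle between the lines, and write each $p \in \cP_2$ as $(t_p, s_p)$, where $t_p$ is its arclength position along $\ell_2$ and $s_p \in [-w, w]$ its signed distance from $\ell_2$. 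Writing $a$ at arclength $x_a$ along $\ell_1$, a short computation gives $\abs{ap}^2 = \bigl(x_a^2 - 2 t_p \cos\alpha \cdot x_a + t_p^2\bigr) + \bigl(2 s_p \sin\alpha \cdot x_a + s_p^2\bigr)$, the first bracket being the two‑line expression and the second a transverse correction of size $O(1 + \abs{x_a})$ since $w$ is constant. The $u$‑spacing enters only through two counting facts: any length‑$\Lambda$ interval along $\ell_1$ holds $O(1 + \Lambda/u)$ points of $\cP_1$, and --- crucially because the strip has bounded width --- any slab $\{\abs{t - c} \leq \Lambda\}$ holds $O(1 + \Lambda/u)$ points of $\cP_2$.

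For a scale $\rho \geq u$, consider the proximity‑restricted energy
\[
    \sfE_{\leq\rho} \;=\; \bigl|\{(a,b,p,q) \in \cP_1^2 \times \cP_2^2 : \abs{ap} = \abs{bq},\; \abs{ab} \leq \rho,\; \abs{t_p - t_q} \leq \rho\}\bigr|.
\]
I would lower‑bound it by partitioning $\ell_1$ into length‑$\rho$ intervals and the strip into length‑$\rho$ slabs: every quadruple with $a,b$ in one interval and $p,q$ in one slab is counted, so $\sfE_{\leq\rho} \geq \sum_{\text{blocks}} \sum_\delta N_{\delta,\text{block}}^2$, where $N_{\delta,\text{block}}$ counts the pairs $(a,p)$ in that block at distance $\delta$; since all distances realized inside one interval--slab block span an interval of length $O(\rho)$, the number of distinct values there is small, and summing via Cauchy--Schwarz (each pair $(a,p)$ lies in exactly one block) yields a lower bound growing like $m n^2 \rho / (Du)$. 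The same inequality can also be read off from the ``correlation'' observation --- if $\abs{ap} = \abs{bq}$ and $\abs{ab} \leq \rho$ then $\abs{bp}$ and $\abs{bq}$ differ by $O(\rho)$, so $\ell_2$ crosses the corresponding annulus about $b$ transversally (this is where $\ell_1 \not\parallel \ell_2$ and $\ell_1 \not\perp \ell_2$ enter), forcing $\abs{t_p - t_q} = O(\rho)$ --- except for the \emph{degenerate} quadruples with $p, q$ nearly symmetric about the foot of the perpendicular from $b$ to $\ell_2$; those I would bound separately by the spacing ($O(n)$ such pairs $(p,q)$ per $b$, each forcing $a$ onto $O(1)$ points of $\ell_1$) and absorb.

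For the upper bound I would read $\sfE_{\leq\rho}$ as a point--curve incidence count: the points are the $O(m\rho/u)$ pairs $(x_a, x_b)$ with $\abs{ab} \leq \rho$, and the curves are indexed by the $O(n\rho/u)$ pairs $(p,q)$ with $\abs{t_p - t_q} \leq \rho$, the pair $(p,q)$ contributing the conic $\{(x_a, x_b) \in \RR^2 : \abs{ap} = \abs{bq}\}$, which by the distance formula has the form $x_a^2 - x_b^2 + c\,x_a + d\,x_b + e = 0$ with $(c,d,e)$ an explicit polynomial in $(t_p, s_p, t_q, s_q)$; then $\sfE_{\leq\rho}$ is exactly the number of incidences. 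In the two‑line case ($s_p = s_q = 0$) the triple $(c,d,e)$ is pinned to a surface, so the conics form a two‑parameter family governed by the Szemer\'edi--Trotter exponent, and they never degenerate, since they split into line pairs precisely when $p$ and $q$ are equidistant from $\ell_1$, which requires $s_p$ or $s_q$ nonzero.

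The main obstacle is the incidence bound once the strip is present: the transverse coordinates make $(c,d,e)$ range over a genuinely three‑parameter set --- the worst case for Pach--Sharir --- and a positive‑dimensional set of pairs $(p,q)$ now yields degenerate line‑pair conics. The degenerate pairs are few (for each $p$ only $O(1)$ partners $q$ lie in the strip at the same distance from $\ell_1$) and contribute only $O(mn)$ incidences, which is absorbed; for the rest, the plan is to use the $u$‑spacing of $\cP_2$ once more to slice the $s$‑coordinates into $O(1)$ buckets --- equivalently, to run an induction halving the strip width $w$ a bounded number of times --- reducing to $O(1)$ subfamilies on each of which a Pach--Sharir / polynomial‑partitioning incidence bound for curves with boundedly many degrees of freedom applies. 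Combining this with the lower bound on $\sfE_{\leq\rho}$ yields an inequality relating $\rho$, $m$, $n$, $D$; optimizing over $\rho$ and the slicing parameters, which requires iterating this energy inequality (it turns out to be self‑improving), produces the stated bound at the cost of an $n^\eps$ factor and a constant depending on $\eps$, $u$, $w$, $\alpha$ --- this iteration, together with the slicing incidence step, is where the $\eps$‑loss is incurred, and I expect it to be the hardest part to make precise. The exponent $m^{14/15} n^{8/15 - \eps}$ is what the optimization yields in the main regime, while $m^{3/4} n^{3/4}$ and $\min\{m^2, n^2\}$ come from the complementary, highly unbalanced regimes (where the second proximity restriction, on $(p,q)$, is switched off, or where one of $\cP_1, \cP_2$ is small enough that a direct argument suffices).
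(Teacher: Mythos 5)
Your overall architecture matches the paper's: Cauchy--Schwarz on the distance energy, a proximity-restricted energy $\sfE_{\le\rho}$, a lower bound on it expressing the correlation between the two closeness conditions, and an upper bound by counting incidences between points $(a_x,b_x)$ and the conics $\abs{ap}=\abs{bq}$, which indeed form a $3$-parameter family once the strip is present (and whose reducible members, $p,q$ equidistant from $\ell_1$, contribute only $O(mn)$). The genuine gap is in the lower bound on $\sfE_{\le\rho}$, which is precisely the paper's main new ingredient. Your block-decomposition argument does not work: within one length-$\rho$ interval--slab block the realized distances do lie in an interval of length $O(\rho+w)$, but nothing bounds the number of \emph{distinct} values in that interval (the distance set need not be equidistributed), so Cauchy--Schwarz per block only gives the trivial ``$t^2$-loss'' bound, not the claimed $\Omega(mn^2\rho/(Du))$. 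Your annulus/transversality argument is the right intuition, but the exceptional set is larger and subtler than you allow: for fixed $b$ and $p$, the reflection partners $q$ number $O(1+\rho/u)$ (not $O(1)$), and when the circle about $b$ is nearly tangent to $\ell_2$ the annulus meets the strip in a single arc of length roughly $\sqrt{\delta\rho}\gg\rho$, so pairs $p,q$ in that arc violate $\abs{t_p-t_q}=O(\rho)$ without being ``reflections''; neither case is absorbed by an $O(mn)$ count. Moreover, even granting the correlation, metric closeness $\abs{ab}\le\rho$ is the wrong notion: if the pairs realizing a popular distance $\delta$ have consecutive $a$'s separated by more than $\rho$ (the spacing hypothesis gives no upper bound on gaps), then $\sfE_{\le\rho}$ collapses to the diagonal and your claimed lower bound fails. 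The paper avoids all of this by (i) proving a structural lemma (its Lemma \ref{lem:prox-ingredient}, via the short/steep/shallow trichotomy and Claim \ref{claim:y-spacing}) that for each $\delta$ a constant fraction of the realizing pairs $(a,p)$ can be listed with $a_x$ and $p_y$ simultaneously monotone, and (ii) defining closeness by \emph{rank} (sorted index) rather than distance, so that a telescoping argument along the monotone list yields $\Omega(t r_\delta^2)$ close quadruples per $\delta$ (Lemma \ref{lem:prox-lower-specific}). Some substitute for this structural step is indispensable.

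A second, quantitative gap: the incidence bound you invoke (Pach--Sharir / partitioning for curves with three degrees of freedom) gives $O(\abs{\Pi}^{3/5}\abs{\Gamma}^{4/5}+\abs{\Pi}+\abs{\Gamma})$, and running your optimization with it yields only $\abs{\Delta(\cP_1,\cP_2)}=\Omega(m^{6/7}n^{4/7})$ in the main regime, not $m^{14/15}n^{8/15-\eps}$. The stated exponent requires the Sharir--Zahl bound for irreducible curves in a $3$-parameter family, $O(\abs{\Pi}^{6/11}\abs{\Gamma}^{9/11+\eta}+\abs{\Pi}^{2/3}\abs{\Gamma}^{2/3}+\abs{\Pi}+\abs{\Gamma})$, applied directly with $\abs{\Pi}=O(tm^2)$, $\abs{\Gamma}=O(tn^2)$; no slicing of the transverse coordinate, width-halving induction, or self-improving iteration is needed (and none of those would supply the missing strength --- the only $\eps$-loss in the paper is the $\eta$ in Sharir--Zahl). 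You would also need the small bookkeeping the paper does to make the curve map injective (no two points of $\cP_2$ sharing an $x$-coordinate, arranged by spacing plus the non-vertical strip).
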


For context, Bruner and Sharir \cite{BS18} considered a more general variant of Purdy's problem where we still require that $\cP_1$ lies on a line $\ell_1$, but allow $\cP_2$ to be arbitrary. They showed that as long as no two points in $\cP_2$ lie on a line parallel or perpendicular to $\ell_1$, we must have 
\begin{equation}
    \abs{\Delta(\cP_1, \cP_2)} = \Omega(\min\{m^{10/11}n^{4/11 - \eps}, m^{2/3}n^{2/3}, m^2, n^2\}). \label{eqn:bs18}
\end{equation}
Their proof followed a similar framework to the proof of \eqref{eqn:sss13} --- they also considered the energy $\sfE(\cP_1, \cP_2)$ as defined in \eqref{eqn:ordinary-energy}, set up a collection of points and curves such that $\sfE(\cP_1, \cP_2)$ counted incidences between them, and used incidence bounds to obtain an upper bound on $\sfE(\cP_1, \cP_2)$. Our proof of Theorem \ref{thm:line-vs-strip} primarily involves showing that under the additional constraints in our setting (where $\cP_2$ lies on a strip and both point sets are reasonably spaced out), it is possible to incorporate Solymosi and Zahl's proximity technique into this argument, which allows us to improve this bound. 

\subsection{A statement for nonlinear strips} \label{subsec:nonlinear-strips}

We prove a more general version of Theorem \ref{thm:line-vs-strip} where instead of confining $\cP_2$ to a strip around a \emph{line}, we confine it to a strip around a curve satisfying certain technical conditions. 

Since rotations do not affect distances, we may assume that $\ell_1$ is the $x$-axis. We replace $\ell_2$ with a curve of the form $\{(f(y), y) \mid y \in \RR\}$ for some $f : \RR \to \RR$. 
 
\begin{itemize}
    \item For $s > 0$, we say $f$ is \deftext{$s$-Lipschitz} if for all $y_1, y_2 \in \RR$, we have $\abs{f(y_1) - f(y_2)} \leq s\abs{y_1 - y_2}$.
    \item For $k \in \NN$, we say $f$ is \deftext{$k$-nice} if for every $\delta \geq 0$, it is possible to partition $[-\delta, \delta]$ into $k$ sets $S_1$, \ldots, $S_k$ such that the function 
    \begin{equation*}
        \varphi_\delta(y) = f(y) + \sqrt{\delta^2 - y^2}
    \end{equation*}
    is monotone on each, and similarly it is possible to partition $[-\delta, \delta]$ into $k$ sets $T_1$, \ldots, $T_k$ such that the function 
    \begin{equation*}
        \psi_y(\delta) = f(y) - \sqrt{\delta^2 - y^2}
    \end{equation*}
    is monotone on each. (In this paper, we always use the words `monotone,' `increasing,' and `decreasing' in the \emph{weak} sense.)
\end{itemize} 

\begin{theorem} \label{thm:gen-curves}
    Fix $u, w, s > 0$ and $k \in \NN$. Let $f : \RR \to \RR$ be $s$-Lipschitz and $k$-nice, and let \[\cP_1 \subseteq \{(x, 0) \mid x \in \RR\} \quad \text{and} \quad \cP_2 \subseteq \Strip_w(\{(f(y), y) \mid y \in \RR\})\] be $u$-spaced sets of points with $\abs{\cP_1} = m$ and $\abs{\cP_2} = n$ such that no two points in $\cP_2$ have the same $x$-coordinate. Then for every $\eps > 0$, we have \[\abs{\Delta(\cP_1, \cP_2)} = \Omega(\min\{m^{14/15}n^{8/15 - \eps}, m^{3/4}n^{3/4}, m^2, n^2\}),\] where the implicit constant depends on $\eps$, $u$, $w$, $s$, and $k$. 
\end{theorem}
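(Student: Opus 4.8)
The plan is to run the energy--incidence argument that Bruner and Sharir \cite{BS18} used for the `point on a line versus arbitrary point' problem, but to graft onto it the proximity mechanism of Solymosi and Zahl \cite{SZ24}; the hypotheses that $f$ be $s$-Lipschitz and $k$-nice and that $\cP_2$ have distinct $x$-coordinates are precisely what makes this grafting go through, and it should improve \eqref{eqn:bs18} in the same way that \eqref{eqn:sz24} improves \eqref{eqn:sss13}. After standard preliminary reductions --- a dyadic pigeonhole on the magnitudes of the distances so that all later sums over dyadic scales carry only $O(\log(m + n))$ terms (absorbed into $n^\eps$), and disposing of the unbalanced ranges of $m$ and $n$ where the $m^2$ or $n^2$ term of the minimum wins by an elementary argument --- we work with the distance energy $\sfE(\cP_1, \cP_2)$ of \eqref{eqn:ordinary-energy}, which by Cauchy--Schwarz satisfies $\abs{\Delta(\cP_1, \cP_2)} \geq (mn)^2 / \sfE(\cP_1, \cP_2)$. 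So it suffices to upper-bound $\sfE(\cP_1, \cP_2)$ --- in fact, a proximity-restricted variant of it.

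The geometric heart is the proximity step. For a dyadic scale $\rho$, let $\sfE_\rho$ count the quadruples $(a, b, p, q) \in \cP_1^2 \times \cP_2^2$ with $\abs{ap} = \abs{bq}$ and, in addition, $\abs{ab} \asymp \rho$. Adapting \cite{SZ24}, the key claim is a correlation between the two sides: every such quadruple has $\abs{pq} \asymp_{s, k} \abs{ab}$, apart from a controlled set of degenerate quadruples that can be bounded directly. Heuristically, $p$ lies within $w$ of the point(s) of the curve $\{(f(y), y)\}$ on the circle of radius $\abs{ap}$ about $a$, and $q$ lies within $w$ of the point(s) of the curve on the equal-radius circle about $b$; these circles are horizontal translates of one another by $a - b$, and since $\ell_1$ is horizontal and the curve is an $s$-Lipschitz function of $y$ (hence uniformly transverse to $\ell_1$, the analogue here of the non-parallel hypothesis), translating a circle's center horizontally by $\abs{ab}$ shifts the relevant circle--curve intersection by a comparable amount. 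The degenerate quadruples are those near a tangency of a circle with the curve or near a coincidence of two circle--curve intersection points, and it is exactly the $k$-niceness of $f$ --- which bounds how often $\varphi_\delta$ and $\psi_y$ change monotonicity, i.e.\ how often a circle weaves in and out of the strip --- that keeps these small. Since every non-degenerate quadruple is then counted by $\sfE_\rho$ for the essentially unique $\rho \asymp \abs{ab} \asymp \abs{pq}$, we get $\sfE(\cP_1, \cP_2) \lesssim \sum_\rho \sfE_\rho + (\text{error})$, while the proximity restriction degrades the Cauchy--Schwarz lower bound by at most a constant --- this `well-correlatedness' is what makes imposing proximity profitable.

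For each $\rho$ we then set up an incidence problem in the style of \cite{BS18}. Writing $a = (X, 0)$ and $b = (Z, 0)$, the condition $\abs{ap} = \abs{bq}$ for a fixed $(p, q) = ((p_1, p_2), (q_1, q_2))$ becomes the conic $(X - p_1)^2 - (Z - q_1)^2 = q_2^2 - p_2^2$ in the $(X, Z)$-plane --- a bounded-degree algebraic curve, regardless of the shape of the curve carrying $\cP_2$. Thus $\sfE_\rho$ counts incidences between $\{(X_a, Z_b) : a, b \in \cP_1,\ \abs{ab} \asymp \rho\}$, of size $O(m\rho)$ by the $u$-spacing of $\cP_1$, and the family of these conics over pairs $(p, q) \in \cP_2^2$ with $\abs{pq} \asymp \rho$, of size $O_{s, w}(n\rho)$ by the $u$-spacing of $\cP_2$ together with the strip confinement and the Lipschitz bound (a radius-$\rho$ disk meets $\Strip_w(\{(f(y), y)\})$ in a region of area $O_{s, w}(\rho)$) --- this shrinkage of the instance is the gain from proximity. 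The distinct-$x$-coordinate hypothesis guarantees that distinct pairs $(p, q)$ give distinct conics, and a short computation shows that any two of them meet in $O(1)$ points and that the family has bounded degrees of freedom, so the Pach--Sharir incidence bound (sharpened by polynomial partitioning, the source of $n^\eps$) applies. Feeding the bound on $\sfE_\rho$ back, summing over the $O(\log(m + n))$ relevant scales, and optimizing gives the needed upper bound on $\sfE(\cP_1, \cP_2)$; the terms of the incidence bound yield the $m^{14/15} n^{8/15 - \eps}$ and $m^{3/4} n^{3/4}$ parts of the theorem, and the $m^2, n^2$ parts come from the preliminary reductions.

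The main obstacle will be the proximity correlation of the second paragraph. On two lines \cite{SZ24} obtains it for free by writing $\abs{ap}^2$ as an explicit quadratic form in line-coordinates; here one must instead work with the implicit relation $\abs{ap} = \abs{bq}$ over a width-$w$ strip around a curve that is only assumed Lipschitz and $k$-nice, track the $O(w)$ errors, and --- above all --- pin down and count the degenerate quadruples where the correlation fails, with the right dependence on $s$ and $k$. Carrying this out in tandem with the incidence-geometric non-degeneracy of the conic family (no shared components, bounded intersection numbers, which rests on the distinct-$x$-coordinate hypothesis) is the technical core.
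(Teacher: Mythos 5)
Your high-level framing (energy plus Cauchy--Schwarz, proximity to shrink the incidence instance, conics in the $(X,Z)$-plane whose distinctness uses the distinct-$x$-coordinate hypothesis) matches the paper, but the step you yourself identify as the heart --- the claim that every quadruple with $\abs{ap}=\abs{bq}$ satisfies $\abs{pq}\asymp_{s,k}\abs{ab}$ outside a controlled degenerate set --- is not true, and $k$-niceness is not the tool that would control the exceptions. When $p$ sits near the apex of the circle of radius $\delta$ about $a$ (the paper's ``steep'' pairs, $\abs{p_y}\geq 4s\sqrt{\delta^2-p_y^2}$), a tiny displacement of $p$ along the strip moves the corresponding point of $\ell_1$ by an amount on the order of $\delta\,\abs{p_y-q_y}/\sqrt{\delta^2-p_y^2}$, so $\abs{ab}/\abs{pq}$ is unbounded; dually, near critical points of $\varphi_\delta$ (your near-tangency regime) the ratio can be arbitrarily small. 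These configurations can account for essentially all of $\cR_\delta$, and $k$-niceness only bounds how many times $\varphi_\delta,\psi_\delta$ change monotonicity --- it gives \emph{order} information, not two-sided metric comparability, and it does not bound the number of steep or near-tangent pairs. The paper therefore proves only an order-theoretic statement (Lemma \ref{lem:prox-ingredient}: a constant fraction of each $\cR_\delta$ can be listed with $p_y$ and $a_x$ simultaneously monotone, via the short/steep/shallow trichotomy where the Lipschitz, spacing, and $k$-niceness hypotheses actually enter), defines closeness by \emph{rank} after sorting $\cP_1$ by $x$ and $\cP_2$ by $y$ rather than by metric distance, and deduces $\sfE_t(\cP_1,\cP_2)=\Omega(t\,\sfE(\cP_1,\cP_2))$ by a telescoping argument along the monotone list.

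There is a second structural problem: even if your correlation held, decomposing $\sfE=\sum_\rho \sfE_\rho$ over all dyadic scales of $\abs{ab}$ and applying the incidence bound at each scale cannot give the stated gain, because the per-scale upper bounds grow with $\rho$ and the top scales (where the pair counts cap at $m^2$ and $n^2$) already reproduce the unrestricted Bruner--Sharir-type bound; summing over scales only adds a logarithm. The profit from proximity comes from the asymmetric single-scale inequality $\sfE_t=\Omega(t\,\sfE)$ --- both the point set ($\leq 3tm^2$) and curve set ($\leq 3tn^2$) shrink by a factor $t$ while the energy lower bound loses only one factor of $t$ --- after which one optimizes a single $t\approx mn/\sfE$ (with a floor $t\geq 1/m$) and applies the Sharir--Zahl three-parameter-family bound $O(\abs{\Pi}^{6/11}\abs{\Gamma}^{9/11+\eta}+\abs{\Pi}^{2/3}\abs{\Gamma}^{2/3}+\abs{\Pi}+\abs{\Gamma})$; this is what produces the exponents $14/15$, $8/15-\eps$ and $3/4$, $3/4$ (plain Pach--Sharir with bounded degrees of freedom would not). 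You would also need to dispose of the reducible conics with $p_y=\pm q_y$ separately (the paper's $4mn$ term) before invoking irreducibility. So the proposal as written has a genuine gap at the proximity lower bound, and its scale-summation scheme would need to be replaced by the rank-based, single-parameter optimization to recover the theorem.
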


The definition of $k$-niceness may look somewhat strange (it comes from the proof --- we need it in order to make proximity work). However, this condition is satisfied by many natural functions. In particular, functions of the form $f(y) = sy$ (corresponding to lines) are $2$-nice. One way to see this is to rearrange the equation $z = sy \pm \sqrt{\delta^2 - y^2}$ to $(z - sy)^2 + y^2 = \delta^2$, so the graphs of $\varphi_\delta$ and $\psi_\delta$ are the `top' and `bottom' arcs (respectively) of the ellipse \[\{(y, z) \in \RR^2 \mid (z - sy)^2 + y^2 = \delta^2\},\] each of which can be cut into two monotone pieces. (See Figure \ref{fig:lines-nice}.)

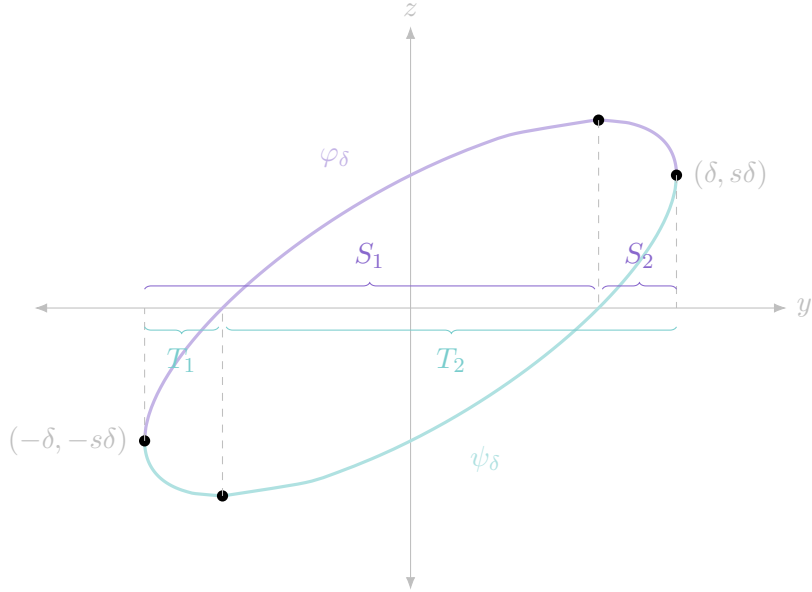
\begin{figure}[ht]
    \centering
    \begin{tikzpicture}[scale = 2.5]
        \draw [gray!50, Latex-Latex] (-2, 0) -- (2, 0) node [right] {$y$};
        \draw [gray!50, Latex-Latex] (0, -1.5) -- (0, 1.5) node [above] {$z$};
        \draw [domain = -1:{sqrt(2)/2}, smooth, DarkSlateGray3!60, very thick] plot ({\x + sqrt(1 - \x*\x)}, \x);
        \draw [domain = {sqrt(2)/2}:1, smooth, MediumPurple3!50, very thick] plot ({\x + sqrt(1 - \x*\x)}, \x);
        \draw [domain = -1:{-sqrt(2)/2}, smooth, DarkSlateGray3!60, very thick] plot ({\x - sqrt(1 - \x*\x)}, \x);
        \draw [domain = {-sqrt(2)/2}:1, smooth, MediumPurple3!50, very thick] plot ({\x - sqrt(1 - \x*\x)}, \x);
        \node [sdot] at (-1, -1) {};
        \node [sdot, label = {right, gray!50}: {$(\delta, s\delta)$}] at ({sqrt(2)}, {sqrt(2)/2}) {};
        \node [sdot] at (1, 1) {};
        \node [sdot, label = {left, gray!50}: {$(-\delta, -s\delta)$}] at ({-sqrt(2)}, {-sqrt(2)/2}) {};
        \draw [gray!50, dashed] ({sqrt(2)}, {sqrt(2)/2}) -- ({sqrt(2)}, 0);
        \draw [gray!50, dashed] (1, 1) -- (1, 0);
        \draw [gray!50, dashed] ({-sqrt(2)}, {-sqrt(2)/2}) -- ({-sqrt(2)}, 0);
        \draw [gray!50, dashed] (-1, -1) -- (-1, 0);
        \node [MediumPurple3!50] at (-0.4, 0.8) {$\varphi_\delta$};
        \node [DarkSlateGray3!60] at (0.4, -0.8) {$\psi_\delta$};
        \draw [MediumPurple3, decorate, decoration = {brace}] ({-sqrt(2)}, 0.1) to node [midway, above = {0.15cm}] {$S_1$} (0.98, 0.1);
        \draw [MediumPurple3, decorate, decoration = {brace}] (1.02, 0.1) to node [midway, above = {0.15cm}] {$S_2$} ({sqrt(2)}, 0.1);
        \draw [DarkSlateGray3, decorate, decoration = {brace}] (-1.02, -0.1) to node [midway, below = {0.15cm}] {$T_1$} ({-sqrt(2)}, -0.1);
        \draw [DarkSlateGray3, decorate, decoration = {brace}] ({sqrt(2)}, -0.1) to node [midway, below = {0.15cm}] {$T_2$} (-0.98, -0.1);
    \end{tikzpicture}
    \caption{An illustration of why $f(y) = sy$ is $2$-nice.} \label{fig:lines-nice}
\end{figure}

More generally, any function $f$ which defines a piece of an algebraic curve is $k$-nice for some constant $k$. To state this more formally, for a polynomial $g \in \RR[x, y]$, we write \[\cZ(g) = \{(x, y) \in \RR^2 \mid g(x, y) = 0\};\] an \deftext{algebraic curve} is a nonempty set $\gamma \subseteq \RR^2$ for which we can write $\gamma = \cZ(g)$ for some nonconstant $g \in \RR[x, y]$. Then if $\{(f(y), y) \mid y \in \RR\}$ is a subset of some algebraic curve $\cZ(g)$ (i.e., $g(f(y), y) = 0$ for all $y$), we can show that $f$ is $k$-nice for some $k$ only depending on $\deg g$. One way to see this is that given $\delta$, we can consider the set \[\cV_\delta = \{(x, y, z) \in \RR^3 \mid g(x, y) = 0, \, (z - x)^2 + y^2 = \delta^2\}.\] Then the graphs of $\varphi_\delta$ and $\psi_\delta$ are both subsets of the projection of $\cV_\delta$ onto the $yz$-plane. But $\cV_\delta$ is a $1$-dimensional variety, so by \cite[Lemma 4.12]{She22}, its projection onto the $yz$-plane is contained in a $1$-dimensional variety $\cZ(h)$ in $\RR^2$ for some $h \in \RR[x, y]$ whose degree is bounded in terms of $\deg g$. As shown in \cite[Section 2]{SZ24}, $\cZ(h)$ can be cut into a constant number (depending on $\deg h$) of monotone pieces; so the graphs of $\varphi_\delta$ and $\psi_\delta$ can also be cut into a constant number of monotone pieces. 

We conclude that in addition to strips around lines, Theorem \ref{thm:gen-curves} also applies to strips around algebraic curves satisfying the Lipschitz condition. (For an example, see Figure \ref{fig:polystrip-ex}.) 

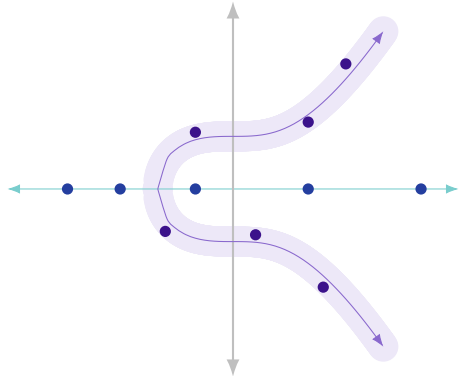
\begin{figure}[ht]
    \centering
    \begin{tikzpicture}[scale = 1]
        \foreach \x in {-1, -0.99, ..., 2} {
            \fill [MediumPurple3!15] (\x, {0.7*sqrt(\x*\x*\x + 1)}) circle (0.2);
            \fill [MediumPurple3!15] (\x, {-0.7*sqrt(\x*\x*\x + 1)}) circle (0.2);
        }

        \draw [DarkSlateGray3, Latex-Latex] (-3, 0) -- (3, 0);
        \draw [gray!50, thick, Latex-Latex] (0, -2.5) -- (0, 2.5);
        \draw [domain = -1:2, MediumPurple3, -Latex, smooth] plot (\x, {0.7*sqrt(\x*\x*\x + 1)});
        \draw [domain = -1:2, MediumPurple3, -Latex, smooth] plot (\x, {-0.7*sqrt(\x*\x*\x + 1)});
        \foreach \i in {1, 2.5, -0.5, -1.5, -2.2} {
            \node [sdot, myblue] at (\i, 0) {};
        }
        \foreach \i\j in {1.5/0.2, 1/-0.1, -0.5/0.1} {
            \node [sdot, mypurple] at (\i, {0.7*sqrt(\i*\i*\i + 1) + \j}) {};
        }
        \foreach \i\j in {-0.9/-0.2, 0.3/0.1, 1.2/-0.15} {
            \node [sdot, mypurple] at (\i, {-0.7*sqrt(\i*\i*\i + 1) + \j}) {};
        }
    \end{tikzpicture}
    \caption{For example, Theorem \ref{thm:gen-curves} applies if we take $\cP_2$ to lie in a strip around the curve $y^2 = x^3 + 1$.} \label{fig:polystrip-ex}
\end{figure}

\subsection{Overview} 

In the rest of the paper, we will prove Theorem \ref{thm:gen-curves}; since lines are $2$-nice, this directly implies Theorem \ref{thm:line-vs-strip}. (Theorem \ref{thm:gen-curves} does have an extra condition that the points in $\cP_2$ do not have repeated $x$-coordinates. However, in the setting of Theorem \ref{thm:line-vs-strip}, the spacing condition on $\cP_2$ together with the fact that $\cP_2$ lies on a non-vertical linear strip guarantees that at most a constant number of points in $\cP_2$ have any given $x$-coordinate, so we can ensure that no two points have the same $x$-coordinate by shrinking $\cP_2$ by a constant factor.)

First, we can assume without loss of generality that $s \geq 1$ (if a function is $s$-Lipschitz, it is also $\max\{s, 1\}$-Lipschitz). Also, if a set of points $\cP$ is $u$-spaced for some $u > 0$, then for any $u' > 0$, we can find a subset $\cP' \subseteq \cP$ consisting of a constant fraction (depending only on $u$ and $u'$) of $\cP$ which is $u'$-spaced; this allows us to fix a specific value of $u$ without loss of generality. So in order to prove Theorem \ref{thm:gen-curves}, it suffices to prove the following statement. 

\begin{lemma} \label{lem:cleaned-up-main}
    Fix $w > 0$, $s \geq 1$, and $k \in \NN$. Let $f : \RR \to \RR$ be $s$-Lipschitz and $k$-nice, and let \[\cP_1 \subseteq \{(x, 0) \mid x \in \RR\} \quad \text{and} \quad \cP_2 \subseteq \Strip_w(\{(f(y), y) \mid y \in \RR\})\] be $32ws$-spaced sets of points with $\abs{\cP_1} = m$ and $\abs{\cP_2} = n$, such that no two points in $\cP_2$ have the same $x$-coordinate. Then for every $\eps > 0$, we have \[\abs{\Delta(\cP_1, \cP_2)} = \Omega(\min\{m^{14/15}n^{8/15 - \eps}, m^{3/4}n^{3/4}, m^2, n^2\}),\] where the implicit constant depends only on $\eps$ and $k$. 
\end{lemma}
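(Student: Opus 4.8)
The plan is to follow the energy-and-incidences strategy of Sharir–Sheffer–Solymosi and Bruner–Sharir, but with the proximity refinement of Solymosi–Zahl. Suppose for contradiction that $\abs{\Delta(\cP_1,\cP_2)}$ is small; write $D = \abs{\Delta(\cP_1,\cP_2)}$. By Cauchy–Schwarz, the distance energy $\sfE(\cP_1,\cP_2)$ from \eqref{eqn:ordinary-energy} is at least $m^2n^2/D$. So it suffices to prove an upper bound on $\sfE$. The key move is to instead work with a proximity-restricted energy: fix a proximity parameter $\rho$ (to be optimized), and consider
\[
    \sfE_\rho = \abs{\{(a,b,p,q) \in \cP_1^2 \times \cP_2^2 \mid \abs{ap} = \abs{bq}, \ \abs{ab} \le \rho, \ \abs{pq} \le \rho\}}.
\]
First I would establish the lower bound $\sfE_\rho \gtrsim \rho^2 m^2 n^2 / (D \cdot \text{(area factors)})$ — the point, following Solymosi–Zahl, is that the two proximity conditions are well-correlated, so imposing both costs only roughly a factor of $\rho^2$ (times spacing-dependent constants, since $u$-spacing controls how many points lie in a ball of radius $\rho$) compared to the naive energy. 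This is where the spacing hypotheses on both $\cP_1$ and $\cP_2$ and the strip structure of $\cP_2$ get used: I would need that for a typical distance class, the pairs realizing it can be `localized,' and that near-diagonal pairs $(a,b)$ and $(p,q)$ dominate.

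**The incidence bound.**

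Next I would set up the incidence problem for the upper bound on $\sfE_\rho$. As in Bruner–Sharir, each pair $(a,b) \in \cP_1^2$ with $\abs{ab} \le \rho$ gives a point in a parameter space, and each pair $(p,q) \in \cP_2^2$ with $\abs{pq} \le \rho$ gives a curve, so that $\sfE_\rho$ counts incidences. Because $\cP_1$ lies on a line, a pair $(a,b)$ is naturally a point in $\RR^2$ (two coordinates along $\ell_1$); because $\cP_2$ lies in a width-$w$ strip around the curve $\{(f(y),y)\}$, a pair $(p,q)$ is governed by roughly two parameters (the arc-length-type positions of $p$ and $q$) up to bounded error from the strip width, and the locus $\abs{ap} = \abs{bq}$ becomes a curve in the $(a,b)$-plane. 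The proximity conditions mean there are only about $\rho m$ points and $\rho n$ curves (rather than $m^2$ and $n^2$). I would then invoke the Pach–Sharir incidence bound for points and curves with bounded degrees of freedom; the nonlinearity/niceness conditions ensure the curves have bounded degrees of freedom and that no curve is incident to too many points (this is exactly where $k$-niceness is needed — it lets the relevant loci be cut into boundedly many monotone pieces, ruling out the degenerate `too many incidences on one curve' case, which would otherwise correspond to $\ell_1,\ell_2$ being parallel or orthogonal). This yields $\sfE_\rho \lesssim (\rho m)^{2/3}(\rho n)^{2/3} + \rho m + \rho n$ up to lower-order terms.

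**Optimizing and handling degenerate contributions.**

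Combining the two bounds gives an inequality of the form $\rho^2 m^2 n^2 / D \lesssim (\rho m)^{2/3}(\rho n)^{2/3} + \cdots$, and then I would optimize over $\rho$ (subject to $\rho$ not exceeding the diameter scale and $\rho$ being at least the spacing scale). The exponent $14/15$ on $m$ and $8/15 - \eps$ on $n$ should fall out of this optimization, with the $n^{-\eps}$ loss coming from a dyadic-pigeonholing or popularity-partitioning step needed to pass between the energy and the proximity-restricted energy (one typically restricts to distance classes of comparable multiplicity, losing a $\log$ factor per scale and hence $n^{\eps}$ overall). The terms $m^{3/4}n^{3/4}$, $m^2$, $n^2$ in the minimum are the thresholds below which the main incidence term stops dominating — $m^{3/4}n^{3/4}$ is presumably where the Solymosi–Zahl two-line bound \eqref{eqn:sz24} takes over, and $m^2, n^2$ are the trivial regimes. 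I would need to carefully track all the constants' dependence on $k$ (and verify the suppressed dependence on $w, s$ has been absorbed by the reduction to Lemma \ref{lem:cleaned-up-main}).

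**Main obstacle.**

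I expect the hard part to be the lower bound on $\sfE_\rho$, i.e., showing that the double proximity restriction does not cost more than $\rho^2$ up to spacing-dependent constants. Proving this requires genuinely using the geometry: one must show that among quadruples with $\abs{ap} = \abs{bq}$, those with both $a$ near $b$ and $p$ near $q$ carry a $\rho^2$-fraction of the total — equivalently, that the map $(a,p) \mapsto (\abs{ap}, \text{something})$ does not concentrate the energy on far-apart pairs. The strip geometry enters because a point $p \in \Strip_w(\ell_2)$ is within $w$ of a unique-ish point of $\ell_2$, so distances $\abs{ap}$ behave, up to $O(w)$ additive error, like distances from $\ell_1$ to $\ell_2$; but turning `up to $O(w)$ error' into a clean combinatorial statement, while simultaneously using $u$-spacing to bound point multiplicities in small balls, is the delicate technical core. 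A secondary obstacle is verifying the degrees-of-freedom and no-heavy-curve hypotheses of the incidence bound in the nonlinear setting, which is precisely what $k$-niceness is engineered to provide via the monotone-pieces decomposition.
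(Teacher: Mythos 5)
Your framework matches the paper's at the coarsest level (lower-bound the energy via Cauchy--Schwarz, restrict to proximate quadruples, upper-bound via incidences, optimize the proximity parameter), but there are two genuine gaps, one in each half of the argument. First, the heart of the matter is the proximity lower bound, and your proposal both leaves it unproved and states its content incorrectly: you write $\sfE_\rho \gtrsim \rho^2 m^2n^2/D$ and say that ``imposing both costs only roughly a factor of $\rho^2$,'' but a $\rho^2$-type loss is exactly what independence of the two proximity conditions would give, and with that loss the proximity step yields no improvement at all over the Bruner--Sharir argument. The entire gain comes from losing only a \emph{single} factor of $t$ (Lemma \ref{lem:prox-lower}: $\sfE_t = \Omega(t\,\sfE)$, with closeness measured by rank in the orderings of $\cP_1$ by $x$-coordinate and $\cP_2$ by $y$-coordinate, not by metric distance). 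Proving this single-factor loss is precisely what requires the structural input you never identify: for each distance $\delta$, a constant fraction of the pairs in $\cR_\delta$ can be arranged in a list along which $a_x$ and $p_y$ are simultaneously monotone (Lemma \ref{lem:prox-ingredient}), which the paper proves by splitting pairs into short, steep, and shallow, using the Lipschitz condition (via Claim \ref{claim:y-spacing}) for steep pairs and $k$-niceness (monotonicity of $\varphi_\delta$ on $k$ pieces) for shallow pairs; a telescoping count along such a monotone list then produces $\Omega(t r_\delta^2)$ proximate quadruples per distance. Your ``localization / near-diagonal pairs dominate'' sketch does not supply this, and you explicitly flag it as the unresolved main obstacle.

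Second, the incidence step as you propose it cannot produce the claimed exponents. You misattribute the role of $k$-niceness to the incidence bound (degrees of freedom, no heavy curves); in the paper it plays no role there. The upper bound comes from applying the Sharir--Zahl bound for a $3$-parameter family of algebraic curves (Theorem \ref{thm:sz17}), with points $(a_x,b_x)$ for $t$-close pairs in $\cP_1^2$ and curves $(x-p_x)^2+p_y^2=(y-q_x)^2+q_y^2$ for $t$-close pairs in $\cP_2^2$; the exponents $6/11$ and $9/11+\eta$ are what generate the term $m^{14/15}n^{8/15-\eps}$ (and the $\eps$-loss comes from this $\eta$, not from dyadic pigeonholing). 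If you instead use a Pach--Sharir-type bound $\abs{\Pi}^{2/3}\abs{\Gamma}^{2/3}+\abs{\Pi}+\abs{\Gamma}$ as you suggest, then even with the correct factor-$t$ proximity lemma the optimization only yields $\sfE = O(m^{5/4}n^{5/4}+m^2+n^2)$, i.e.\ only the $m^{3/4}n^{3/4}$ term of the conclusion (which in the paper arises from the $\abs{\Pi}^{2/3}\abs{\Gamma}^{2/3}$ term of Theorem \ref{thm:sz17}, not from quoting the Solymosi--Zahl two-line result). So as written the proposal cannot reach the $m^{14/15}n^{8/15-\eps}$ bound, and the step it defers is exactly the new content of the paper.
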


(The constant $32$ is not important; we did not attempt to optimize the numbers we use.)

First, in Section \ref{sec:main-ingredient}, we prove the following intermediate lemma, which is the key input needed to adapt Solymosi and Zahl's proximity argument (from \cite{SZ24}) to our setting. We denote the coordinates of a point $p \in \RR^2$ as $p_x$ and $p_y$. For every $\delta \in \Delta(\cP_1, \cP_2)$, we write \[\cR_\delta = \{(a, p) \in \cP_1 \times \cP_2 \mid \abs{ap} = \delta\}.\]  

\begin{lemma} \label{lem:prox-ingredient}
    Let $w$, $s$, $k$, $\cP_1$, and $\cP_2$ be as in Lemma \ref{lem:cleaned-up-main}. Then for each $\delta \in \Delta(\cP_1, \cP_2)$, we can find a list consisting of an $\Omega(1)$-fraction of the pairs $(a, p) \in \cR_\delta$ in which $p_y$ and $a_x$ are monotone (where the implicit constant depends on $k$). 
\end{lemma}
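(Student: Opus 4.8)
The plan is to read off from the distance equation how $a_x$ is determined by $p$, and to recognise the result as a small perturbation of one of the functions $\varphi_\delta,\psi_\delta$ in the definition of $k$-niceness. Fix $\delta\in\Delta(\cP_1,\cP_2)$. For a pair $(a,p)\in\cR_\delta$, write $a=(a_x,0)$; then $\sabs{ap}=\delta$ becomes $(a_x-p_x)^2+p_y^2=\delta^2$, so $p_y\in[-\delta,\delta]$ and $a_x=p_x\pm\sqrt{\delta^2-p_y^2}$. I would split $\cR_\delta$ into the pairs with $p_x\ge a_x$ (for which $a_x=p_x-\sqrt{\delta^2-p_y^2}$) and those with $p_x<a_x$ (for which $a_x=p_x+\sqrt{\delta^2-p_y^2}$); one of the two parts contains at least half of $\cR_\delta$, and I assume without loss of generality that it is the first (the second is handled identically, with $\varphi_\delta$ in place of $\psi_\delta$ below).

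Next I would use the strip hypothesis to pin $p$ close to the curve: there is $t$ with $\sabs{p_x-f(t)}\le w$ and $\sabs{p_y-t}\le w$, so $s$-Lipschitzness together with $s\ge1$ gives $\sabs{p_x-f(p_y)}\le w+sw\le 2ws$. Hence for every retained pair, $a_x=\psi_\delta(p_y)+e(p)$ where $\psi_\delta(y)=f(y)-\sqrt{\delta^2-y^2}$ and $\sabs{e(p)}\le 2ws$. Now I would invoke $k$-niceness to partition $[-\delta,\delta]$ into $T_1,\ldots,T_k$ on each of which $\psi_\delta$ is monotone, and keep only the pairs whose $p_y$ lies in the heaviest $T_i$; this retains at least $\sabs{\cR_\delta}/(2k)$ pairs.

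Finally I would order this sublist, and this is where the spacing hypotheses are used. The $32ws$-spacing of $\cP_2$ forces the surviving pairs to have pairwise distinct $p_y$: two surviving pairs with distinct points of $\cP_2$ and equal $p_y$ would give two points of $\cP_2$ with the same $y$-coordinate and $x$-coordinates each within $2ws$ of $f(p_y)$, hence at distance at most $4ws$, contradicting the spacing; and two surviving pairs with the same point of $\cP_2$ must have the same point of $\cP_1$, since $a_x=p_x-\sqrt{\delta^2-p_y^2}$ is determined by $p$, so they coincide. I order the pairs by increasing $p_y$. Suppose $\psi_\delta$ is increasing on the chosen $T_i$ and let $(a,p),(a',p')$ be surviving pairs with $p_y<p_y'$; then
\[a_x-a_x'=\bigl(\psi_\delta(p_y)-\psi_\delta(p_y')\bigr)+\bigl(e(p)-e(p')\bigr)\le 0+4ws<32ws,\]
so the $32ws$-spacing of $\cP_1$ rules out $a_x>a_x'$ (which would force $a\ne a'$ and hence $\sabs{a_x-a_x'}\ge 32ws$); thus $a_x\le a_x'$, i.e.\ $a_x$ is weakly increasing along the list. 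If instead $\psi_\delta$ is decreasing on $T_i$, the same estimate makes $a_x$ weakly decreasing. Either way we obtain a list of $\Omega(\sabs{\cR_\delta})$ pairs (with constant $1/(2k)$) along which $p_y$ and $a_x$ are both monotone.

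The one genuinely non-routine step is this last one: the map $p_y\mapsto a_x$ is monotone only \emph{up to the additive error $\pm2ws$} coming from the width of the strip, and we must upgrade this to honest monotonicity. The mechanism is that the values $a_x$ lie in the $32ws$-separated set $\{q_x:q\in\cP_1\}$, which is far too coarse for an error of size $4ws$ to reverse an inequality; the spacing of $\cP_2$ plays the separate and easier role of keeping the $p_y$-values distinct, so that ``order by $p_y$'' is well defined. Everything else — the sign split, the bound $\sabs{p_x-f(p_y)}\le 2ws$, and the pigeonhole over $T_1,\ldots,T_k$ — is bookkeeping.
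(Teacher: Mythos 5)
Your proof is correct, but it takes a genuinely different and in fact more streamlined route than the paper. The paper also splits $\cR_\delta$ by the sign of $a_x - p_x$, but then runs a trichotomy: pairs are classified as short ($\sabs{p_y}\le w$), steep ($\sabs{p_y}\ge 4s\sqrt{\delta^2-p_y^2}$), or shallow, with a separate monotonicity argument for each. The reason for this case analysis is that the paper approximates $a_x$ by $\varphi_\delta(p_y^*)$, where $p^*$ is the nearest point on the curve; the resulting error contains the term $\sabs{\sqrt{\delta^2-p_y^2}-\sqrt{\delta^2-(p_y^*)^2}}$, which is only $O(ws)$ when the pair is shallow, so steep pairs must be handled by a different mechanism (there, the variation of the square-root term dominates the Lipschitz-controlled variation of $p_x$, giving monotonicity of $a_x$ in $\sabs{p_y}$ directly). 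Your choice to compare $a_x$ with $\psi_\delta(p_y)$ (respectively $\varphi_\delta(p_y)$) rather than with $\varphi_\delta(p_y^*)$ keeps the square root unchanged, so the error is exactly $p_x - f(p_y)$, uniformly at most $2ws$ by the strip and Lipschitz hypotheses, and a single pigeonhole over the $k$ monotone pieces followed by the $32ws$-spacing upgrade of approximate to exact monotonicity finishes the proof with no steep/short cases at all. The endgame is the same in both arguments (approximate monotonicity plus the spacing of $\cP_1$ yields honest weak monotonicity, and the spacing of $\cP_2$ makes the ordering by $p_y$ well defined), and your constant $1/(2k)$ versus the paper's comparable $\Omega(1/k)$ makes no difference downstream. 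A small additional benefit of your version: since $\sabs{p_y}\le\delta$ automatically, you only ever evaluate $\psi_\delta$ or $\varphi_\delta$ inside $[-\delta,\delta]$, which matches the stated definition of $k$-niceness exactly, whereas evaluating at $p_y^*$ requires a little extra care near $\sabs{p_y}\approx\delta$.
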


Then in Section \ref{sec:prox}, we use this to prove Lemma \ref{lem:cleaned-up-main}.

\section{The main ingredient for proximity} \label{sec:main-ingredient}

In this section, we prove Lemma \ref{lem:prox-ingredient}. First, we can write $\cR_\delta = \cR_\delta^+ \cup \cR_\delta^-$ where 
\begin{align*}
    \cR_\delta^+ &= \left\{(a, p) \in \cP_1 \times \cP_2 \mid a_x = p_x + \sqrt{\delta^2 - p_y^2}\right\}, \\
    \cR_\delta^- &= \left\{(a, p) \in \cP_1 \times \cP_2 \mid a_x = p_x - \sqrt{\delta^2 - p_y^2}\right\}.
\end{align*} 
We assume without loss of generality that $\sabs{\cR_\delta^+} \geq \sabs{\cR_\delta^-}$ and restrict our attention to just the pairs in $\cR_\delta^+$ (the proof when $\sabs{\cR_\delta^-} \geq \sabs{\cR_\delta^+}$ is essentially identical, with a few flipped signs). Note that for every $p \in \cP_2$, there is at most one $a \in \cP_1$ with $(a, p) \in \cR_\delta^+$.  

We classify the pairs $(a, p) \in \cR_\delta^+$ into three types:
\begin{itemize}
    \item We say $(a, p)$ is \deftext{short} if $\abs{p_y} \leq w$. 
    \item We say $(a, p)$ is \deftext{steep} if it is not short and $\abs{p_y} \geq 4s\sqrt{\delta^2 - p_y^2}$. 
    \item We say $(a, p)$ is \deftext{shallow} if it is neither short nor steep. 
\end{itemize} 
Intuitively, whether a pair $(a, p)$ is steep or shallow corresponds to how close $\protect\ora{pa}$ is to being vertical (specifically, how its slope compares to $-4s$), as depicted in Figure \ref{fig:steep-and-shallow}.
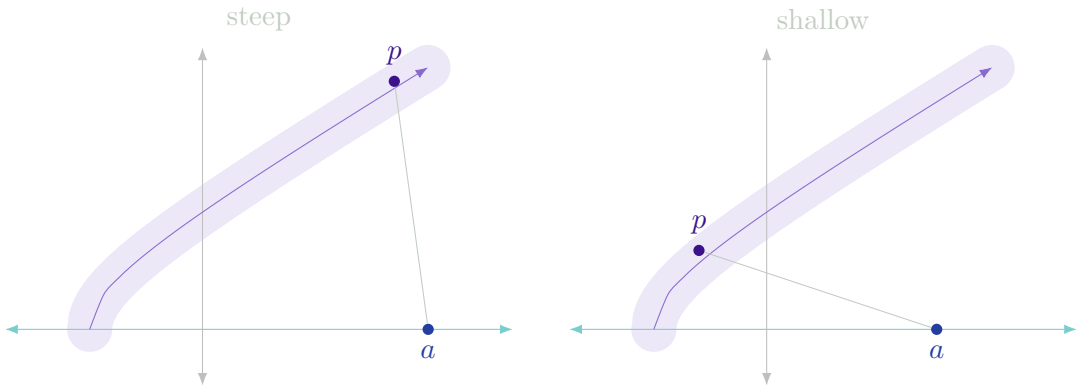
\begin{figure}[ht]
    \centering
    \begin{tikzpicture}[scale = 1.5]
        \foreach \x in {-1, -0.99, ..., 2} {
            \fill [MediumPurple3!15] (\x, {0.6*sqrt(\x*\x + 4*\x + 3)}) circle (0.2);
        }
        \draw [DarkSlateGray3, Latex-Latex] (-1.75, 0) -- (2.75, 0);
        \draw [gray!50, Latex-Latex] (0, -0.5) -- (0, 2.5);
        \draw [domain = -1:2, MediumPurple3, -Latex, smooth] plot (\x, {0.6*sqrt(\x*\x + 4*\x + 3)});
        \node [sdot, mypurple, label = {[mypurple] above: {$p$}}] (p1) at (1.7, 2.2) {};
        \node [sdot, myblue, label = {[myblue] below: {$a$}}] (a1) at (2, 0) {};
        \draw [Honeydew3] (p1) to (a1);
        \node [Honeydew3] at (0.5, 2.75) {steep};
        \begin{scope}[xshift = 5cm]
            \foreach \x in {-1, -0.99, ..., 2} {
                \fill [MediumPurple3!15] (\x, {0.6*sqrt(\x*\x + 4*\x + 3)}) circle (0.2);
            }
            \draw [DarkSlateGray3, Latex-Latex] (-1.75, 0) -- (2.75, 0);
            \draw [gray!50, Latex-Latex] (0, -0.5) -- (0, 2.5);
            \draw [domain = -1:2, MediumPurple3, -Latex, smooth] plot (\x, {0.6*sqrt(\x*\x + 4*\x + 3)});
            \node [sdot, mypurple, label = {[mypurple] above: {$p$}}] (p2) at (-0.6, 0.7) {};
            \node [sdot, myblue, label = {[myblue] below: {$a$}}] (a2) at ({-0.6 + sqrt(4.44)}, 0) {};
            \draw [Honeydew3] (p2) to (a2);
            \node [Honeydew3] at (0.5, 2.75) {shallow};
        \end{scope}
    \end{tikzpicture}
    \caption{A steep and shallow pair $(a, p)$.} \label{fig:steep-and-shallow}
\end{figure}

We will prove the following statements regarding each of these types:
\begin{enumerate}[(1)]
    \item \label{item:short} There is at most one short pair $(a, p)$. 
    \item \label{item:steep} We can find a list consisting of at least half the steep pairs in which $p_y$ and $a_x$ are monotone. 
    \item \label{item:shallow} We can find a list consisting of at least a $(1/k)$-fraction of the shallow pairs in which $p_y$ and $a_x$ are monotone. 
\end{enumerate}

This will imply Lemma \ref{lem:prox-ingredient}, since either short, steep, or shallow pairs have to account for at least a $(1/3)$-fraction of $\cR_\delta^+$, and therefore at least a $(1/6)$-fraction of $\cR_\delta$. 

\subsection{A preliminary observation}

First, the following observation converts the spacing condition on $\cP_2$ from a statement about the points having large pairwise \emph{distances} to a statement about the points having spaced-out \emph{$y$-coordinates}. This will be useful in several of the proofs. 

\begin{claim} \label{claim:y-spacing}
    Let $p, q \in \cP_2$ be distinct. Then $\sabs{p_y - q_y} \geq 16w$ and $\abs{p_x - q_x} \leq 2s\abs{p_y - q_y}$. 
\end{claim}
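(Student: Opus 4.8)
The plan is to pass from each point of $\cP_2$ to an ``anchor'' on the curve, use the Lipschitz condition to see that the curve is steep enough that Euclidean distance between anchors controls their vertical separation, and then transfer back to the original points while paying only $O(w)$ error terms. Concretely, for each $r \in \cP_2$, the definition of $\Strip_w$ furnishes a point $r^* = (f(r^*_y), r^*_y)$ on the curve with $\sabs{rr^*} \leq w$; fix one such $r^*$ for each $r$. The structural input I would extract first is that, since $f$ is $s$-Lipschitz, any two anchors satisfy $\sabs{p^*_x - q^*_x} = \sabs{f(p^*_y) - f(q^*_y)} \leq s\sabs{p^*_y - q^*_y}$, so $\sabs{p^*q^*}^2 = (p^*_x - q^*_x)^2 + (p^*_y - q^*_y)^2 \leq (s^2 + 1)(p^*_y - q^*_y)^2 \leq 2s^2(p^*_y - q^*_y)^2$ using $s \geq 1$, i.e.\ $\sabs{p^*q^*} \leq s\sqrt 2\,\sabs{p^*_y - q^*_y}$.

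For the first inequality I would feed in the spacing hypothesis: $\sabs{pq} \geq 32ws$, so by the triangle inequality $\sabs{p^*q^*} \geq 32ws - \sabs{pp^*} - \sabs{qq^*} \geq 32ws - 2w \geq 30ws$ (again using $s \geq 1$). Combined with the displayed bound this gives $\sabs{p^*_y - q^*_y} \geq 30w/\sqrt 2 > 21w$, and subtracting another $2w$ of slop to return from the anchors to the points, $\sabs{p_y - q_y} \geq \sabs{p^*_y - q^*_y} - \sabs{pp^*} - \sabs{qq^*} > 19w > 16w$.

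For the second inequality, the triangle inequality gives $\sabs{p_x - q_x} \leq \sabs{p^*_x - q^*_x} + 2w \leq s\sabs{p^*_y - q^*_y} + 2w \leq s(\sabs{p_y - q_y} + 2w) + 2w$. Since $\sabs{p_y - q_y} \geq 16w \geq 4w$ from the first part and $s \geq 1$, the error term satisfies $2ws + 2w \leq 4ws \leq s\sabs{p_y - q_y}$, which is absorbed into the main term to yield $\sabs{p_x - q_x} \leq 2s\sabs{p_y - q_y}$.

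I do not expect a genuine obstacle here — the whole claim is a short chain of triangle inequalities. The one point that deserves a moment's thought is the very first step: ``$f$ is $s$-Lipschitz as a function of $y$'' is precisely the statement that the curve is close to vertical, which is what converts a large Euclidean gap between anchors into a large gap in their $y$-coordinates. This is also where the assumption $s \geq 1$ and the fairly generous $32ws$-spacing are used, so that the $O(w)$ error coming from the width of the strip is never large enough to spoil either conclusion.
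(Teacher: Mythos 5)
Your proof is correct and follows essentially the same route as the paper: pass to anchor points on the curve, apply the $s$-Lipschitz bound and triangle inequalities, and invoke the $32ws$-spacing together with $s \geq 1$. The only difference is cosmetic --- the paper proves the first inequality by contradiction (showing $\sabs{p_y - q_y} < 16w$ would force $\sabs{pq} < 32ws$), whereas you argue directly at the anchor level via $\sabs{p^*q^*} \leq s\sqrt{2}\,\sabs{p_y^* - q_y^*}$; the second inequality is handled the same way in both.
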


\begin{proof}
    First, because $\cP_2$ lies in the width-$w$ strip around the curve $\{(f(y), y) \mid y \in \RR\}$, we can find points $p^*$ and $q^*$ on this curve --- meaning that $p_x^* = f(p_y^*)$ and $q_x^* = f(q_y^*)$ --- with $\sabs{pp^*}, \sabs{qq^*} \leq w$. Since $f$ is $s$-Lipschitz, we have \[\sabs{p_x^* - q_x^*} = \sabs{f(p_y^*) - f(q_y^*)} \leq s\sabs{p_y^* - q_y^*}.\] Then since $\sabs{pp^*}, \sabs{qq^*} \leq w$, we have 
    \begin{align*}
        \sabs{p_x - q_x} &\leq \sabs{p_x - p_x^*} + \sabs{q_x - q_x^*} + \sabs{p_x^* - q_x^*} \\
        &\leq 2w + s\sabs{p_y^* - q_y^*} \\
        &\leq 2w + s(\sabs{p_y - p_y^*} + \sabs{q_y - q_y^*} + \sabs{p_y - q_y}) \\
        &\leq 2w + 2ws + s\sabs{p_y - q_y}. 
    \end{align*}
    Since $s \geq 1$, this means 
    \begin{equation}
        \sabs{p_x - q_x} \leq 4ws + s\sabs{p_y - q_y}. \label{eqn:xspace-to-yspace}
    \end{equation}

    Assume for contradiction that $\sabs{p_y - q_y} < 16w$. Then $\sabs{p_x - q_x} < 4ws + 16ws \leq 20ws$, so we have \[\abs{pq} = \sqrt{(p_x - q_x)^2 + (p_y - q_y)^2} \leq \sqrt{(20ws)^2 + (16w)^2} \leq \sqrt{20^2 + 16^2} \cdot ws < 32ws,\] contradicting the assumption that $\cP_2$ is $32ws$-spaced.  

    So $\sabs{p_y - q_y} \geq 16w$, which proves the first statement of Claim \ref{claim:y-spacing}. The second statement then follows from plugging $4w \leq \sabs{p_y - q_y}$ into \eqref{eqn:xspace-to-yspace}, which gives \[\sabs{p_x - q_x} \leq 4ws + s\sabs{p_y - q_y} \leq 2s\sabs{p_y - q_y}.\qedhere\] 
\end{proof}

Note that Claim \ref{claim:y-spacing} immediately implies \ref{item:short}, the statement that there is at most one short pair. Claim \ref{claim:y-spacing} also means that all pairs $(a, p) \in \cR_\delta^+$ have different values of $p_y$. 

It now remains to prove \ref{item:steep} and \ref{item:shallow}, which we do in the next subsections. 

\subsection{Steep pairs} 

In this subsection, we prove the following more specific version of \ref{item:steep}. 

\begin{lemma} \label{lem:steep}
    Suppose that $(a, p)$ and $(b, q)$ are steep pairs such that $p_y$ and $q_y$ have the same sign and $\abs{p_y} < \abs{q_y}$. Then $a_x > b_x$. 
\end{lemma}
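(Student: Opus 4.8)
The plan is to compare $a_x$ and $b_x$ directly using the defining relations of $\cR_\delta^+$, namely $a_x = p_x + \sqrt{\delta^2 - p_y^2}$ and $b_x = q_x + \sqrt{\delta^2 - q_y^2}$. Writing
\[
    a_x - b_x = (p_x - q_x) + \left(\sqrt{\delta^2 - p_y^2} - \sqrt{\delta^2 - q_y^2}\right),
\]
the goal becomes showing that the second term is large enough to absorb the (possibly negative) first term. Since $\abs{p_y} < \abs{q_y} \le \delta$, the quantity $\delta^2 - p_y^2$ is strictly positive, so the second term is positive and we may rationalize it:
\[
    \sqrt{\delta^2 - p_y^2} - \sqrt{\delta^2 - q_y^2} = \frac{q_y^2 - p_y^2}{\sqrt{\delta^2 - p_y^2} + \sqrt{\delta^2 - q_y^2}}.
\]
Here I would use the same-sign hypothesis: because $p_y$ and $q_y$ have the same sign, $\abs{q_y - p_y} = \abs{q_y} - \abs{p_y}$, and hence $q_y^2 - p_y^2 = (\abs{q_y} - \abs{p_y})(\abs{q_y} + \abs{p_y}) = \abs{p_y - q_y}\,(\abs{p_y} + \abs{q_y})$.

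Next I would feed in steepness. By definition of a steep pair, $\abs{p_y} \ge 4s\sqrt{\delta^2 - p_y^2}$ and $\abs{q_y} \ge 4s\sqrt{\delta^2 - q_y^2}$; adding these gives $\abs{p_y} + \abs{q_y} \ge 4s\bigl(\sqrt{\delta^2 - p_y^2} + \sqrt{\delta^2 - q_y^2}\bigr)$. Substituting into the fraction above yields
\[
    \sqrt{\delta^2 - p_y^2} - \sqrt{\delta^2 - q_y^2} \ge 4s\,\abs{p_y - q_y}.
\]
On the other hand, Claim \ref{claim:y-spacing} gives $\abs{p_x - q_x} \le 2s\abs{p_y - q_y}$, so in particular $p_x - q_x \ge -2s\abs{p_y - q_y}$. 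Combining the two bounds,
\[
    a_x - b_x \ge -2s\abs{p_y - q_y} + 4s\,\abs{p_y - q_y} = 2s\,\abs{p_y - q_y} > 0,
\]
with strictness because $p_y \ne q_y$ (which follows from $\abs{p_y} < \abs{q_y}$, or from Claim \ref{claim:y-spacing}) and $s \ge 1$. This is precisely $a_x > b_x$.

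I do not expect a genuine obstacle: the argument is a short chain of inequalities. The two points that need care are (i) checking that the denominator $\sqrt{\delta^2 - p_y^2} + \sqrt{\delta^2 - q_y^2}$ is positive so that the rationalization and subsequent division are valid — this is guaranteed by $\abs{p_y} < \abs{q_y} \le \delta$ — and (ii) making sure the identity $q_y^2 - p_y^2 = \abs{p_y - q_y}(\abs{p_y} + \abs{q_y})$ really does invoke the same-sign hypothesis (it fails without it). It is also worth noting how Lemma \ref{lem:steep} delivers statement \ref{item:steep}: partition the steep pairs according to the sign of $p_y$; one of the two classes contains at least half of them, and within that class the lemma shows $a_x$ is a strictly monotone function of $\abs{p_y}$, hence of $p_y$, so listing that class gives the desired monotone list.
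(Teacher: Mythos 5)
Your proof is correct and follows essentially the same route as the paper: the same decomposition $a_x - b_x = (p_x - q_x) + \bigl(\sqrt{\delta^2 - p_y^2} - \sqrt{\delta^2 - q_y^2}\bigr)$, the same use of Claim \ref{claim:y-spacing} for the first term, and the same rationalization of the second. The only cosmetic difference is that you obtain the factor $4s$ by adding the two steepness inequalities, whereas the paper uses the mediant-type bound $\frac{\alpha_1+\alpha_2}{\beta_1+\beta_2} \geq \min\{\alpha_1/\beta_1, \alpha_2/\beta_2\}$; both yield the same estimate.
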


Lemma \ref{lem:steep} implies \ref{item:steep} because it means that steep pairs with $p_y > 0$ form a list with increasing $p_y$ and decreasing $a_x$, and steep pairs with $p_y < 0$ form a list with increasing $p_y$ and increasing $a_x$; one of these lists accounts for at least half the steep pairs.

The geometric intuition behind the proof of Lemma \ref{lem:steep} is that when $(a, p)$ is steep, changing the $y$-coordinate of $p$ by a little has a huge effect on $a$ (the corresponding point on the $x$-axis with $\abs{ap} = \delta$). The Lipschitz condition (or more precisely, the second part of Claim \ref{claim:y-spacing}) means that the difference in $x$-coordinates between $p$ and $q$ is controlled in terms of the difference in $y$-coordinates. Together, these mean that the change in $y$-coordinates as we go from $p$ to $q$ has much greater effect on $a_x - b_x$ than the change in $x$-coordinates. This is illustrated in Figure \ref{fig:steep-proof}. 

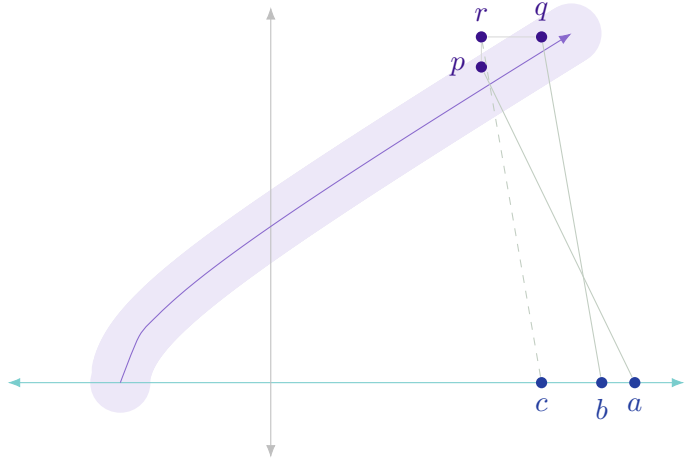
\begin{figure}[ht]
    \begin{tikzpicture}[scale = 2]
        \foreach \x in {-1, -0.99, ..., 2} {
            \fill [MediumPurple3!15] (\x, {0.6*sqrt(\x*\x + 4*\x + 3)}) circle (0.2);
        }
        \draw [DarkSlateGray3, Latex-Latex] (-1.75, 0) -- (2.75, 0);
        \draw [gray!50, Latex-Latex] (0, -0.5) -- (0, 2.5);
        \draw [domain = -1:2, MediumPurple3, -Latex, smooth] plot (\x, {0.6*sqrt(\x*\x + 4*\x + 3)});
        \node [sdot, mypurple, label = {[mypurple] above: {$q$}}] (q) at (1.8, 2.3) {};
        \node [sdot, myblue, label = {[myblue] below: {$b$}}] (b) at (2.2, 0) {};
        \node [sdot, mypurple, label = {[mypurple] left: {$p$}}] (p) at (1.4, 2.1) {};
        \node [sdot, myblue, label = {[myblue] below: {$a$}}] (a) at ({1.4 + sqrt(2.3^2 + 0.4^2 - 2.1^2)}, 0) {};
        \node [sdot, mypurple, label = {[mypurple] above: {$r$}}] (r) at (1.4, 2.3) {};
        \node [sdot, myblue, label = {[myblue] below: {$c$}}] (c) at (1.8, 0) {};
        \draw [Honeydew3] (a) -- (p);
        \draw [Honeydew3] (b) -- (q);
        \draw [Honeydew3, dashed] (c) -- (r);
        \draw [gray!30] (p) -- (r) -- (q);
    \end{tikzpicture}
    \caption{If we move $p$ to $r$ and then to $q$ while keeping track of the corresponding point on the $x$-axis, the change in $p_y$ (when moving from $p$ to $r$) has more effect on this point than the change in $p_x$ (when moving from $r$ to $q$).} \label{fig:steep-proof}
\end{figure}

\begin{proof}
    We have $a_x = p_x + \sqrt{\delta^2 - p_y^2}$ and $b_x = q_x + \sqrt{\delta^2 - q_y^2}$, so 
    \begin{equation}
        a_x - b_x = (p_x - q_x) + \left(\sqrt{\delta^2 - p_y^2} - \sqrt{\delta^2 - q_y^2}\right). \label{eqn:ax-bx}
    \end{equation} 
    The main idea is to show that the first term on the right-hand side of \eqref{eqn:ax-bx} is negligible compared to the second (which is always positive, as $\abs{p_y} < \abs{q_y}$). To do so, first note that 
    \begin{equation}
        \abs{p_x - q_x} \leq 2s\sabs{p_y - q_y} \label{eqn:x-diff}
    \end{equation} 
    by Claim \ref{claim:y-spacing}. Meanwhile, we can rewrite the rightmost term of \eqref{eqn:ax-bx} as 
    \begin{equation}
        \sqrt{\delta^2 - p_y^2} - \sqrt{\delta^2 - q_y^2} = \frac{q_y^2 - p_y^2}{\sqrt{\delta^2 - p_y^2} + \sqrt{\delta^2 - q_y^2}} = \frac{(\abs{p_y} + \abs{q_y})}{\sqrt{\delta^2 - p_y^2} + \sqrt{\delta^2 - q_y^2}} \cdot \abs{p_y - q_y}. \label{eqn:second-term}
    \end{equation} 
    To deal with the factor in front of $\abs{p_y - q_y}$, note that for any $\alpha_1, \alpha_2, \beta_1, \beta_2 \geq 0$, we have \[\frac{\alpha_1 + \alpha_2}{\beta_1 + \beta_2} = \frac{\beta_1}{\beta_1 + \beta_2} \cdot \frac{\alpha_1}{\beta_1} + \frac{\beta_2}{\beta_1 + \beta_2} \cdot \frac{\alpha_2}{\beta_2} \geq \min\left\{\frac{\alpha_1}{\beta_1}, \frac{\alpha_2}{\beta_2}\right\}\] (where if one of $\alpha_1/\beta_1$ and $\alpha_2/\beta_2$ has denominator $0$, we treat its value as $+\infty$). Combining this fact with the assumption that $(a, p)$ and $(b, q)$ are steep gives \[\frac{(\abs{p_y} + \abs{q_y})}{\sqrt{\delta^2 - p_y^2} + \sqrt{\delta^2 - q_y^2}} \geq \min\left\{\frac{\abs{p_y}}{\sqrt{\delta^2 - p_y^2}}, \frac{\abs{q_y}}{\sqrt{\delta^2 - q_y^2}}\right\} \geq 4s.\] Plugging this into \eqref{eqn:second-term} gives 
    \begin{equation}
        \sqrt{\delta^2 - p_y^2} - \sqrt{\delta^2 - q_y^2} \geq 4s\abs{p_y - q_y}.\label{eqn:second-term-final}
    \end{equation} 
    Finally, plugging \eqref{eqn:x-diff} and \eqref{eqn:second-term-final} into \eqref{eqn:ax-bx}, we get \[a_x - b_x \geq -2s\sabs{p_y - q_y} + 4s\sabs{p_y - q_y} > 0.\qedhere\] 
\end{proof}

\subsection{Shallow pairs} 

In this subsection, we prove \ref{item:shallow}. First, for each $p \in \cP_2$, we define $p^*$ as some point on the curve $\{(f(y), y) \mid y \in \RR\}$ with $\sabs{pp^*} \leq w$. We will then prove the following statement. 

\begin{lemma} \label{lem:shallow}
    Suppose that $\varphi_\delta$ is monotone on a set $S \subseteq \RR$. Consider the list of all shallow pairs $(a, p)$ with $p_y^* \in S$, sorted in increasing order of $p_y$. Then in this list, $a_x$ is monotone.
\end{lemma}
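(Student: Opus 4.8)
The plan is to prove the following slightly stronger statement: for \emph{any} two shallow pairs $(a, p)$ and $(b, q)$ with $p_y^*, q_y^* \in S$ and $p_y < q_y$, we have $a_x \le b_x$ when $\varphi_\delta$ is increasing on $S$, and $a_x \ge b_x$ when $\varphi_\delta$ is decreasing on $S$. Since distinct pairs in $\cR_\delta^+$ have distinct values of $p_y$ (by Claim \ref{claim:y-spacing}), this is exactly the assertion that $a_x$ is monotone along the sorted list.

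The first step is to show that $a_x$ is close to $\varphi_\delta(p_y^*)$. Since $(a, p) \in \cR_\delta^+$ we have $a_x = p_x + \sqrt{\delta^2 - p_y^2}$, while $\varphi_\delta(p_y^*) = f(p_y^*) + \sqrt{\delta^2 - (p_y^*)^2} = p_x^* + \sqrt{\delta^2 - (p_y^*)^2}$ because $p^*$ lies on the curve. Hence
\[a_x - \varphi_\delta(p_y^*) = (p_x - p_x^*) + \left(\sqrt{\delta^2 - p_y^2} - \sqrt{\delta^2 - (p_y^*)^2}\right).\]
Here $\abs{p_x - p_x^*} \le \abs{pp^*} \le w$, and I would bound the second term by rationalizing it to $\frac{(p_y^*)^2 - p_y^2}{\sqrt{\delta^2 - p_y^2} + \sqrt{\delta^2 - (p_y^*)^2}}$, estimating its numerator by $\abs{p_y^* - p_y}(\abs{p_y} + \abs{p_y^*}) \le w(2\abs{p_y} + w)$ and its denominator from below by $\sqrt{\delta^2 - p_y^2}$, and then using the two defining properties of a shallow pair --- $\abs{p_y} > w$ (not short) and $\abs{p_y} < 4s\sqrt{\delta^2 - p_y^2}$ (not steep) --- to conclude that it is at most $12ws$. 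Together with $s \ge 1$ this gives $\abs{a_x - \varphi_\delta(p_y^*)} \le 13ws$, and likewise $\abs{b_x - \varphi_\delta(q_y^*)} \le 13ws$.

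The second step combines this with the spacing hypotheses. Since $p \ne q$, Claim \ref{claim:y-spacing} gives $\abs{p_y - q_y} \ge 16w$, and together with $\abs{p_y - p_y^*}, \abs{q_y - q_y^*} \le w$ this yields $q_y^* - p_y^* \ge 14w > 0$, so $p_y^* < q_y^*$. Suppose $\varphi_\delta$ is increasing on $S$; then $\varphi_\delta(p_y^*) \le \varphi_\delta(q_y^*)$, so
\[a_x - b_x = \left(\varphi_\delta(p_y^*) - \varphi_\delta(q_y^*)\right) + \left((a_x - \varphi_\delta(p_y^*)) - (b_x - \varphi_\delta(q_y^*))\right) \le 0 + 26ws < 32ws.\]
Now I invoke that $\cP_1$ is $32ws$-spaced: we cannot have $0 < a_x - b_x < 32ws$, since then $a$ and $b$ would be two distinct points of $\cP_1$ at distance less than $32ws$. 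Hence $a_x \le b_x$, as desired. The case where $\varphi_\delta$ is decreasing on $S$ is symmetric: one gets $a_x - b_x \ge -26ws > -32ws$, and the spacing of $\cP_1$ then forces $a_x \ge b_x$.

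The step I expect to be the crux is the last one, which is where the hypothesis that $\cP_1$ is spaced out is indispensable: $a_x$ is only monotone "up to an $O(ws)$ error" (the auxiliary values $\varphi_\delta(p_y^*)$ are genuinely monotone, but $a_x$ can deviate from them), so the approximation alone does not give monotonicity --- it is precisely because any two distinct points of $\cP_1$ are farther apart than this error that no genuine inversion can occur. The one place needing a little care is the error bound itself: it must come out below $16ws$ so that the two error terms sum to below $32ws$, and this is exactly what the "not short" and "not steep" conditions in the definition of a shallow pair buy us, since a near-vertical pair would make $\sqrt{\delta^2 - p_y^2}$ tiny and the error uncontrolled --- which is why steep pairs are excluded and handled separately.
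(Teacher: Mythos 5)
Your proof is correct and follows essentially the same route as the paper: the same $13ws$ approximation $\abs{a_x - \varphi_\delta(p_y^*)} \leq 13ws$ (the paper's Claim 2.4), the same use of Claim 2.1 to get $p_y^* < q_y^*$, and the same final step where the $32ws$-spacing of $\cP_1$ upgrades approximate monotonicity of $a_x$ to exact monotonicity. The only difference is cosmetic: you argue directly for each monotonicity direction, while the paper phrases the last step as a contrapositive.
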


Lemma \ref{lem:shallow} implies \ref{item:shallow} because the assumption that $f$ is $k$-nice means that we can partition $\RR$ into $k$ such sets $S$, which gives a partition of our shallow pairs into $k$ lists in which both $p_y$ and $a_x$ are monotone. 

\begin{proof}
    Note that if $p_y < q_y$, then $p_y^* < q_y^*$ because of Claim \ref{claim:y-spacing}. Indeed, Claim \ref{claim:y-spacing} means that $q_y - p_y \geq 16w$, so $q_y^* - p_y^* \geq q_y - p_y - 2w \geq 16w - 2w > 0$. This means that sorting our pairs $(a, p)$ by their value of $p_y$ is equivalent to sorting them by their value of $p_y^*$. Then the assumption that $\varphi_\delta$ is monotone on $S$ (and that $p_y^* \in S$ for all these pairs) means that $\varphi_\delta(p_y^*)$ is monotone on our sorted list. Our goal is to use this to show that $a_x$ is also monotone; for this, we will use the following claim. 

    \begin{claim} \label{claim:shallow-wiggle}
        If $(a, p)$ is shallow, then $\sabs{a_x - \varphi_\delta(p_y^*)} \leq 13ws$. 
    \end{claim}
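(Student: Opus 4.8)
The plan is to compute $a_x - \varphi_\delta(p_y^*)$ explicitly and bound the two resulting pieces. Since $(a,p) \in \cR_\delta^+$ we have $a_x = p_x + \sqrt{\delta^2 - p_y^2}$, and since $p^*$ lies on the curve $\{(f(y),y) \mid y \in \RR\}$ we have $\varphi_\delta(p_y^*) = f(p_y^*) + \sqrt{\delta^2 - (p_y^*)^2} = p_x^* + \sqrt{\delta^2 - (p_y^*)^2}$. Subtracting gives
\[
    a_x - \varphi_\delta(p_y^*) = (p_x - p_x^*) + \left(\sqrt{\delta^2 - p_y^2} - \sqrt{\delta^2 - (p_y^*)^2}\right).
\]
The first term has absolute value at most $\sabs{pp^*} \leq w$, so it suffices to bound the second term by $12ws$ and then use $s \geq 1$ to conclude $\sabs{a_x - \varphi_\delta(p_y^*)} \leq w + 12ws \leq 13ws$.

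To bound the second term I would rationalize, exactly as in \eqref{eqn:second-term}:
\[
    \left\lvert\sqrt{\delta^2 - p_y^2} - \sqrt{\delta^2 - (p_y^*)^2}\right\rvert = \frac{\sabs{p_y^* - p_y}\,\sabs{p_y^* + p_y}}{\sqrt{\delta^2 - p_y^2} + \sqrt{\delta^2 - (p_y^*)^2}} \leq \frac{(\sabs{p_y^*} + \sabs{p_y})\,\sabs{p_y^* - p_y}}{\sqrt{\delta^2 - p_y^2} + \sqrt{\delta^2 - (p_y^*)^2}}.
\]
Now $\sabs{p_y^* - p_y} \leq \sabs{pp^*} \leq w$; since $(a,p)$ is not short we have $\sabs{p_y} > w$, so $\sabs{p_y^*} \leq \sabs{p_y} + w < 2\sabs{p_y}$ and hence $\sabs{p_y^*} + \sabs{p_y} < 3\sabs{p_y}$; and since $(a,p)$ is not steep we have $\sabs{p_y} < 4s\sqrt{\delta^2 - p_y^2}$, so the denominator is at least $\sqrt{\delta^2 - p_y^2} > \sabs{p_y}/(4s)$. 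Combining these, the second term is at most $\dfrac{3\sabs{p_y}\cdot w}{\sabs{p_y}/(4s)} = 12ws$, as needed.

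So the proof is essentially a one-line estimate once the terms are laid out; the two defining inequalities of a shallow pair do all the work — ``not steep'' keeps the denominator $\sqrt{\delta^2 - p_y^2}$ from being small relative to $\sabs{p_y}$, and ``not short'' guarantees the $w$-sized discrepancy between $p$ and $p^*$ is dominated by $\sabs{p_y}$. The only point requiring a word of care is that $\sqrt{\delta^2 - (p_y^*)^2}$ should be well-defined, i.e.\ $\sabs{p_y^*} \leq \delta$; I would either arrange this in the choice of $p^*$, or observe that at most $O(1)$ shallow pairs can fail it (such a pair has $p_y \in (\delta - w, \delta] \cup [-\delta, -\delta + w)$, two intervals of length $w < 16w$, and by Claim~\ref{claim:y-spacing} the points of $\cP_2$ have $y$-coordinates spaced at least $16w$ apart), so these can be set aside when the claim is applied in Lemma~\ref{lem:shallow} without affecting the constant there.
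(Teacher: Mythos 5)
Your proof is correct and essentially identical to the paper's: the same decomposition of $a_x - \varphi_\delta(p_y^*)$ into $(p_x - p_x^*)$ plus a difference of square roots, the same rationalization, and the same use of the not-short and not-steep conditions to reach $w + 12ws \leq 13ws$. Your closing remark about ensuring $\sabs{p_y^*} \leq \delta$ so that $\sqrt{\delta^2 - (p_y^*)^2}$ is defined is a legitimate point that the paper glosses over, and your fix (at most two exceptional pairs, since such a pair forces $p_y$ into one of two intervals of length $w$ while Claim \ref{claim:y-spacing} spaces the $y$-coordinates of $\cP_2$ by at least $16w$) disposes of it without affecting the constants in Lemma \ref{lem:shallow} or Lemma \ref{lem:prox-ingredient}.
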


    The geometric intuition behind Claim \ref{claim:shallow-wiggle} is that $a$ is the point on the $x$-axis at a distance $\delta$ from $p$ (and to the right of $p$), while $(\varphi_\delta(p_y^*), 0)$ is the analogous point for $p^*$ (by the definition of $\varphi_\delta$). And the fact that $(a, p)$ is shallow means that moving $p$ by a small distance only has a small effect on the corresponding point $a$. This is illustrated in Figure \ref{fig:shallow-proof}. 

    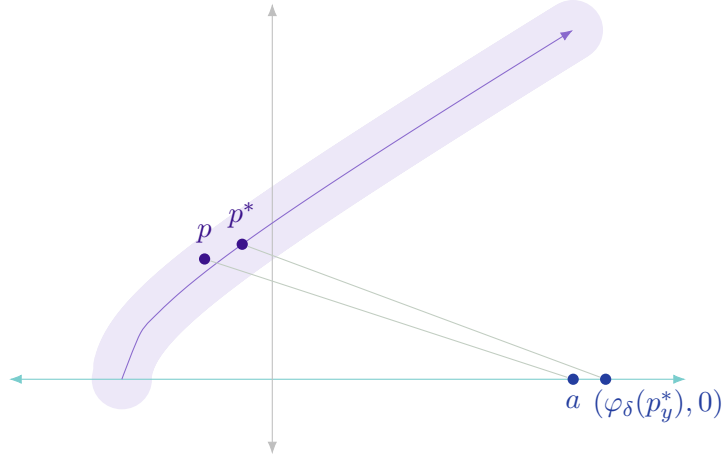
\begin{figure}[ht]
        \begin{tikzpicture}[scale = 2]
            \foreach \x in {-1, -0.99, ..., 2} {
                \fill [MediumPurple3!15] (\x, {0.6*sqrt(\x*\x + 4*\x + 3)}) circle (0.2);
            }
            \draw [DarkSlateGray3, Latex-Latex] (-1.75, 0) -- (2.75, 0);
            \draw [gray!50, Latex-Latex] (0, -0.5) -- (0, 2.5);
            \draw [domain = -1:2, MediumPurple3, -Latex, smooth] plot (\x, {0.6*sqrt(\x*\x + 4*\x + 3)});
            \node [sdot, mypurple, label = {[mypurple] above: {$p$}}] (p) at (-0.45, 0.8) {};
            \node [sdot, mypurple, label = {[mypurple] above: {$p^*$}}] (p2) at (-0.2, {0.6*sqrt(0.04 - 4*0.2 + 3)}) {};
            \node [sdot, myblue, label = {[myblue] below: {$a$}}] (a) at (2, 0) {};
            \node [sdot, myblue] (a2) at ({-0.2 + sqrt(2.45^2 + 0.8^2 - 0.36*2.24)}, 0) {};
            \node [myblue, anchor = north] at ($(a2) + (0.35, 0)$) {$(\varphi_\delta(p_y^*), 0)$} {};
            \draw [Honeydew3] (a) -- (p);
            \draw [Honeydew3] (p2) -- (a2);
        \end{tikzpicture}
        \caption{Moving $p$ to $p^*$ only has a small effect on the point on the $x$-axis at a distance $\delta$, which moves from $a$ to $(\varphi_\delta(p_y^*), 0)$.} \label{fig:shallow-proof}
    \end{figure}

    \begin{proof}
        First, we have $a_x = p_x + \sqrt{\delta^2 - p_y^2}$ and \[\varphi_\delta(p_y^*) = f(p_y^*) + \sqrt{\delta^2 - (p_y^*)^2} = p_x^* + \sqrt{\delta^2 - (p_y^*)^2},\] so by the triangle inequality, \[\abs{a_x - \varphi_\delta(p_y^*)} \leq \sabs{p_x - p_x^*} + \abs{\sqrt{\delta^2 - p_y^2} - \sqrt{\delta^2 - (p_y^*)^2}}.\] For the first term, we have $\sabs{p_x - p_x^*} \leq \sabs{pp^*} \leq w$. For the second term, we can first write\[\abs{\sqrt{\delta^2 - p_y^2} - \sqrt{\delta^2 - (p_y^*)^2}} = \frac{\sabs{(p_y^*)^2 - p_y^2}}{\sqrt{\delta^2 - p_y^2} + \sqrt{\delta^2 - (p_y^*)^2}} \leq \frac{\sabs{p_y^* - p_y}\sabs{p_y^* + p_y}}{\sqrt{\delta^2 - p_y^2}}.\] We have $\sabs{p_y^* - p_y} \leq \sabs{pp^*} \leq w$ and $\sabs{p_y^* + p_y} \leq 2\sabs{p_y} + w \leq 3\sabs{p_y}$ (here we are using the fact that $(a, p)$ is not short, so $\sabs{p_y} \geq w$). This means \[\abs{\sqrt{\delta^2 - p_y^2} - \sqrt{\delta^2 - (p_y^*)^2}} \leq \frac{3w\abs{p_y}}{\sqrt{\delta^2 - p_y^2}}.\] And the assumption that $(a, p)$ is shallow means that this is less than $12ws$. So we get \[\sabs{a_x - \varphi_\delta(p_y^*)} \leq w + 12ws \leq 13ws.\qedhere\] 
    \end{proof}

    To complete the proof of Lemma \ref{lem:shallow}, note that Claim \ref{claim:shallow-wiggle} together with the assumption that $\cP_1$ is $32ws$-spaced means that if $(a, p)$ and $(b, q)$ are shallow and $a_x < b_x$, then $\varphi_\delta(p_y^*) < \varphi_\delta(q_y^*)$. And we saw earlier that $\varphi_\delta(p_y^*)$ is monotone in our list, so $a_x$ must be as well. 
\end{proof}

We have now proved each of \ref{item:short}, \ref{item:steep}, and \ref{item:shallow}, completing the proof of Lemma \ref{lem:prox-ingredient}. 

\section{The proximity argument} \label{sec:prox}

In this section, we prove Lemma \ref{lem:cleaned-up-main} by running an adaptation of Solymosi and Zahl's proximity argument (sketched in Subsection \ref{subsec:background}). We define the \deftext{distance energy} between $\cP_1$ and $\cP_2$ as 
\begin{equation}
    \sfE(\cP_1, \cP_2) = \abs{\{(a, b, p, q) \in \cP_1^2 \times \cP_2^2 \mid \abs{ap} = \abs{bq}\}}. \label{eqn:dist-energy}
\end{equation} 
As described in Subsection \ref{subsec:background}, in order to prove a lower bound on $\sabs{\Delta(\cP_1, \cP_2)}$, it suffices to prove an \emph{upper} bound on $\sfE(\cP_1, \cP_2)$. Specifically, for each $\delta \in \Delta(\cP_1, \cP_2)$, we can define $\cR_\delta = \{(a, p) \in \cP_1 \times \cP_2 \mid \abs{ap} = \delta\}$ and $r_\delta = \sabs{\cR_\delta}$; then by the Cauchy--Schwarz inequality, we have
\begin{equation}
    \sfE(\cP_1, \cP_2) = \sum_{\delta \in \Delta(\cP_1, \cP_2)} r_\delta^2 \geq \frac{(\sum_{\delta \in \Delta(\cP_1, \cP_2)} r_\delta)^2}{\abs{\Delta(\cP_1, \cP_2)}} = \frac{m^2n^2}{\abs{\Delta(\cP_1, \cP_2)}}. \label{eqn:cauchy--schwarz}
\end{equation}
In particular, Bruner and Sharir \cite{BS18} proved their bound \eqref{eqn:bs18} for the setting where $\cP_1$ lies on a line and $\cP_2$ is unrestricted by using incidence bounds to upper-bound $\sfE(\cP_1, \cP_2)$. 

We prove Lemma \ref{lem:cleaned-up-main} by incorporating proximity into this argument. The main idea of Solymosi and Zahl's proximity argument is to restrict the distance energy (as defined in \eqref{eqn:dist-energy}) to only consider quadruples $(a, b, p, q)$ where $a$ is `close' to $b$ and $p$ is `close' to $q$. 

To formally define the appropriate notion of `closeness' for our setting, imagine that we sort the points in $\cP_1$ by their $x$-coordinate; for $a \in \cP_1$, we use $i_1(a)$ to denote the index of $a$ under this sorting. (For example, if $\cP_1 = \{(-1, 0), (2, 0), (4, 0), (5, 0)\}$ and $a = (4, 0)$, then $i_1(a) = 3$.) We sort the points in $\cP_2$ by their $y$-coordinate; for $p \in \cP_2$, we use $i_2(p)$ to denote the index of $p$ under this sorting. (By Claim \ref{claim:y-spacing}, all points in $\cP_2$ have distinct $y$-coordinates.)

For $t \in [0, 1]$, we say a pair $(a, b) \in \cP_1^2$ is \deftext{$t$-close} if $\sabs{i_1(a) - i_1(b)} \leq tm$; similarly, we say a pair $(p, q) \in \cP_2^2$ is $t$-close if $\sabs{i_2(p) - i_2(q)} \leq tn$. We define \[\sfE_t(\cP_1, \cP_2) = \abs{\{(a, b, p, q) \in \cP_1^2 \times \cP_2^2 \mid \abs{ap} = \abs{bq}, \, \text{$(a, b)$ and $(p, q)$ are $t$-close}\}}.\] 

In Subsection \ref{subsec:lower}, we prove a lower bound on $\sfE_t(\cP_1, \cP_2)$ in terms of $\sfE(\cP_1, \cP_2)$; in Subsection \ref{subsec:upper}, we prove an upper bound on $\sfE(\cP_1, \cP_2)$ using incidence bounds; and in Subsection \ref{subsec:computations}, we combine these bounds and choose an appropriate value of $t$ to complete the proof. 

\subsection{A lower bound} \label{subsec:lower}

In this subsection, we prove the following lower bound on $\sfE_t(\cP_1, \cP_2)$. 

\begin{lemma} \label{lem:prox-lower}
    We have $\sfE_t(\cP_1, \cP_2) = \Omega(t \cdot \sfE(\cP_1, \cP_2))$ (where the implicit constant depends on $k$). 
\end{lemma}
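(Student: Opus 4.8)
The idea is to prove, for each $\delta \in \Delta(\cP_1,\cP_2)$, a lower bound of the form $N_\delta = \Omega\!\big(t\,r_\delta^2\big)$ on the number of $t$-close ``coincident'' pairs inside $\cR_\delta$, and then to sum over $\delta$, using $\sfE(\cP_1,\cP_2) = \sum_\delta r_\delta^2$ (as in \eqref{eqn:cauchy--schwarz}). The point is that by Lemma~\ref{lem:prox-ingredient} we may pass to a sublist $\cR_\delta' \subseteq \cR_\delta$ with $\sabs{\cR_\delta'} \geq c\,r_\delta$ (for $c = c(k) = \Omega(1)$) in which $p_y$ and $a_x$ are monotone, and on this sublist the $x$-order of the $a$'s and the $y$-order of the $p$'s are linked, so that the associated points live on a monotone staircase in $[m]\times[n]$. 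Throughout we may assume $t \geq 1/m$ and $t \geq 1/n$ (for smaller $t$ the $t$-closeness conditions force $a=b$ or $p=q$, and a separate elementary argument applies).

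\textbf{The per-$\delta$ count.} Fix $\delta$. Sort $\cR_\delta'$ in increasing order of $p_y$ as $(a^{(1)},p^{(1)}),\dots,(a^{(r')},p^{(r')})$, where $r' = \sabs{\cR_\delta'} \geq c\,r_\delta$; by Claim~\ref{claim:y-spacing} all $p_y$ are distinct, so the list is strictly increasing in $p_y$, hence in $i_2(p^{(j)})$, and by the monotonicity from Lemma~\ref{lem:prox-ingredient}, $j \mapsto i_1(a^{(j)})$ is (weakly) monotone as well. Thus the points $(x_j,y_j) := (i_1(a^{(j)}), i_2(p^{(j)}))$, $j=1,\dots,r'$, form a staircase in $[m]\times[n]$ that is monotone in each coordinate. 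Partition $[m]$ into $K := \lceil 1/t\rceil$ consecutive blocks each of length $\leq tm$, and $[n]$ into $K$ consecutive blocks each of length $\leq tn$ (possible since $tm,tn \geq 1$); this gives a $K\times K$ grid of cells. If $(x_j,y_j)$ and $(x_{j'},y_{j'})$ lie in the same cell, then $\sabs{i_1(a^{(j)})-i_1(a^{(j')})}\le tm$ and $\sabs{i_2(p^{(j)})-i_2(p^{(j')})}\le tn$, so $(a^{(j)},a^{(j')})$ and $(p^{(j)},p^{(j')})$ are both $t$-close. Let $N_\delta$ be the number of ordered pairs $(j,j')$ whose points share a cell. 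The key observation is that, because each of $j\mapsto x_j$ and $j\mapsto y_j$ is monotone, the sequence of cells visited is a chain in (a possibly reversed) product order on $[K]\times[K]$, hence occupies at most $2K-1$ distinct cells. By Cauchy--Schwarz,
\[
    N_\delta = \sum_{\text{cells }C}(\#\{j : (x_j,y_j)\in C\})^2 \;\geq\; \frac{(r')^2}{2K-1} \;\geq\; \frac{(r')^2}{2K} \;\geq\; \frac{t\,(r')^2}{4} \;\geq\; \frac{c^2 t}{4}\,r_\delta^2,
\]
using $K \leq 2/t$ for $t\in(0,1]$.

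\textbf{Assembling and bookkeeping.} Each ordered pair $(j,j')$ counted by $N_\delta$ yields a quadruple $(a^{(j)},a^{(j')},p^{(j)},p^{(j')})$ with $\abs{a^{(j)}p^{(j)}} = \abs{a^{(j')}p^{(j')}} = \delta$ and both pairs $t$-close, i.e.\ a quadruple counted by $\sfE_t(\cP_1,\cP_2)$. This assignment is injective: within a fixed $\delta$ the list $\cR_\delta'$ consists of distinct pairs, so $(j,j')$ is recovered from the quadruple, and $\delta = \abs{a^{(j)}p^{(j)}}$ is itself determined by the quadruple. Hence
\[
    \sfE_t(\cP_1,\cP_2) \;\geq\; \sum_{\delta \in \Delta(\cP_1,\cP_2)} N_\delta \;\geq\; \frac{c^2 t}{4}\sum_{\delta} r_\delta^2 \;=\; \frac{c^2 t}{4}\,\sfE(\cP_1,\cP_2),
\]
which is the claimed bound with implicit constant $c(k)^2/4$ depending only on $k$.

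\textbf{Main obstacle.} The substantive step is the observation that a monotone staircase meets only $O(1/t)$ of the $\Theta(1/t)^2$ grid cells (the chain bound $2K-1$ in $[K]\times[K]$); this is exactly where the monotonicity supplied by Lemma~\ref{lem:prox-ingredient} is used, and without it one would only get $N_\delta = \Omega(t^2 r_\delta^2)$, losing the whole gain of the proximity method. The remaining care is (i) the bookkeeping that $\sfE_t \geq \sum_\delta N_\delta$ (injectivity across and within distance classes), and (ii) treating the degenerate range $t < 1/m$ or $t < 1/n$ by a separate elementary argument, where the grid construction is unavailable.
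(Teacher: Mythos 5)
Your argument is correct and shares the paper's overall structure --- it reduces to the per-$\delta$ statement that the number of $t$-close quadruples coming from $\cR_\delta$ is $\Omega(t r_\delta^2)$ (this is exactly Lemma~\ref{lem:prox-lower-specific}), obtains it from the monotone list supplied by Lemma~\ref{lem:prox-ingredient}, and then sums over $\delta$ with the same injectivity bookkeeping --- but the per-$\delta$ count is done by a different device. The paper fixes a shift $s \approx t\ell/8$ and uses a telescoping sum to show that for at least $\ell/2$ indices $j$ both $(a_j, a_{j+s})$ and $(p_j, p_{j+s})$ are $t$-close, then takes all intermediate $j' \in [j, j+s]$; you instead lay down an $O(1/t) \times O(1/t)$ grid of cells of index-diameter at most $tm$ and $tn$, note that a doubly monotone sequence visits at most $2K-1 = O(1/t)$ cells, and apply Cauchy--Schwarz within cells. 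Both exploit the monotonicity in the same way and give the same constant-factor-quality bound; the grid-plus-Cauchy--Schwarz version is arguably a little slicker, while the paper's shift argument has the small advantage of working verbatim for every $t \in [0,1]$. That is the one caveat in your write-up: the lemma as stated has no hypothesis $tm, tn \geq 1$, and your deferral of the small-$t$ range is not fully justified as written --- the observation that $t$-closeness ``forces $a = b$'' does not by itself settle the regime $1/n \leq t < 1/m$, where one still has to produce $\Omega(t r_\delta^2)$ quadruples of the form $(a, a, p, q)$. Fortunately your own construction closes this immediately: when $tm < 1$ (resp.\ $tn < 1$) use singleton blocks, so the number of column (resp.\ row) blocks is $m \leq 1/t$ (resp.\ $n \leq 1/t$), the staircase still meets $O(1/t)$ cells, and the same Cauchy--Schwarz step goes through; alternatively, the lemma is only ever invoked in the paper with $tm, tn \geq 1$, so stating it with that hypothesis would also suffice.
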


Intuitively, the bound of Lemma \ref{lem:prox-lower} is useful because in the definition of $\sfE_t(\cP_1, \cP_2)$, we place proximity restrictions on both $(a, b)$ and $(p, q)$. Each of these restrictions shrinks the number of `allowed' pairs by a factor of roughly $t$. So if proximity had no relation to the condition $\abs{ap} = \abs{bq}$, then we would expect these restrictions to shrink the number of quadruples $(a, b, p, q)$ by a factor of roughly $t^2$. The factor-of-$t$ shrinkage given by Lemma \ref{lem:prox-lower} is much better than this.

We will prove the following more specific statement. 

\begin{lemma} \label{lem:prox-lower-specific}
    For every $\delta \in \Delta(\cP_1, \cP_2)$, we have 
    \begin{equation}
        \abs{\{(a, b, p, q) \in \cP_1^2 \times \cP_2^2 \mid \abs{ap} = \abs{bq} = \delta, \, \text{$(a, b)$ and $(p, q)$ are $t$-close}\}} = \Omega(tr_\delta^2). \label{eqn:prox-lower-specific}
    \end{equation}
\end{lemma}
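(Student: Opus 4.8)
The plan is to pass to the monotone list supplied by Lemma \ref{lem:prox-ingredient} and then run a pigeonhole-plus-Cauchy--Schwarz argument over an interval partition, where the crucial point is that monotonicity saves a factor of $t$ that the bare partition argument would lose. We may assume $t > 0$ and $r_\delta > 0$, since otherwise \eqref{eqn:prox-lower-specific} is vacuous. Apply Lemma \ref{lem:prox-ingredient} to get a list $L$ of pairs in $\cR_\delta$ with $\abs{L} = N = \Omega(r_\delta)$ (implicit constant depending on $k$) along which $p_y$ and $a_x$ are both monotone. Reversing $L$ if necessary, assume $a_x$ is (weakly) increasing along $L$; then $i_1(a)$ is weakly increasing along $L$, and $i_2(p)$ is weakly monotone along $L$ (in one fixed direction, since $p_y$ is).

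Next, partition $\{1, \dots, m\}$ into $K := \lceil 1/t \rceil$ consecutive intervals $I_1, \dots, I_K$, each of length at most $\lceil tm \rceil \le tm + 1$, and partition $\{1, \dots, n\}$ into $K$ consecutive intervals $J_1, \dots, J_K$, each of length at most $\lceil tn \rceil$; this many intervals suffice since $K\lceil tm\rceil \ge (1/t)(tm) = m$, and similarly for $n$. For $(\alpha, \beta) \in \{1,\dots,K\}^2$, let $c_{\alpha\beta}$ be the number of pairs $(a,p) \in L$ with $i_1(a) \in I_\alpha$ and $i_2(p) \in J_\beta$; since each pair of $L$ lies in exactly one cell, $\sum_{\alpha,\beta} c_{\alpha\beta} = N$. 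If $(a,p), (b,q) \in L$ lie in the same cell, then $\abs{i_1(a) - i_1(b)} \le \lceil tm\rceil - 1 \le tm$ and $\abs{i_2(p) - i_2(q)} \le tn$, so $(a,b)$ and $(p,q)$ are $t$-close, and $\abs{ap} = \abs{bq} = \delta$ since $(a,p),(b,q) \in \cR_\delta$. Distinct ordered pairs $((a,p),(b,q))$ of elements of $L$ sharing a cell yield distinct quadruples counted on the left of \eqref{eqn:prox-lower-specific}, so that count is at least $\sum_{\alpha,\beta} c_{\alpha\beta}^2$.

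The remaining task is to bound $\sum_{\alpha,\beta} c_{\alpha\beta}^2$ from below, and this is the heart of the matter. A naive Cauchy--Schwarz over all $K^2$ cells gives only $\sum c_{\alpha\beta}^2 \ge N^2/K^2 = \Omega(t^2 r_\delta^2)$, which is a factor of $t$ too weak. Instead, note that as we traverse $L$, the sequence of cells $(\alpha_i, \beta_i)$ has both coordinates weakly monotone (the first weakly increasing, the second weakly monotone in a fixed direction). Any sequence in $\{1,\dots,K\}^2$ with each coordinate weakly monotone visits at most $(K-1)+(K-1)+1 = 2K-1$ distinct cells, since entering a new cell requires strictly changing $\alpha$ or $\beta$, and each can change strictly at most $K-1$ times. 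Hence at most $2K-1$ of the $c_{\alpha\beta}$ are nonzero, and Cauchy--Schwarz over those gives
\[
\sum_{\alpha,\beta} c_{\alpha\beta}^2 \ge \frac{N^2}{2K-1} \ge \frac{N^2}{2\lceil 1/t\rceil} = \Omega(t N^2) = \Omega(t r_\delta^2),
\]
where we used $t \le 1$ and $N = \Omega(r_\delta)$, with the implicit constant depending only on $k$. This proves \eqref{eqn:prox-lower-specific}.

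I expect the only genuine obstacle to be the last step: the purely combinatorial interval partition inevitably loses a factor of $t$, and recovering it forces one to use the monotonicity from Lemma \ref{lem:prox-ingredient} in an essential way — namely, the occupied cells form a monotone staircase of length $O(1/t)$ rather than filling out a $K\times K$ grid. The reduction to $L$, the verification that "same cell" implies $t$-closeness, and the injectivity of the map to quadruples are all routine bookkeeping.
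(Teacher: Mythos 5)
Your proof is correct. The key input is the same as in the paper --- both arguments start from the monotone list supplied by Lemma \ref{lem:prox-ingredient} and exploit the fact that monotonicity forces the two proximity conditions to be correlated along the list --- but your counting device is genuinely different. The paper shifts within the list: with $s \approx t\ell/8$, a telescoping-sum argument shows that for at least $\ell/2$ indices $j$ both $(a_j, a_{j+s})$ and $(p_j, p_{j+s})$ are $t$-close, and then every intermediate index $j'$ with $j \leq j' \leq j+s$ yields a counted quadruple, giving $\Omega(t\ell^2)$ directly. You instead partition the rank spaces $[m]$ and $[n]$ into $\lceil 1/t \rceil$ blocks, observe that the cells visited by the monotone list form a staircase of length at most $2\lceil 1/t\rceil - 1$, and apply Cauchy--Schwarz over the occupied cells; all the supporting checks in your write-up (same cell implies $t$-closeness since $\lceil tm\rceil - 1 \leq tm$, injectivity of the map to quadruples, the staircase bound, and $2\lceil 1/t\rceil - 1 = O(1/t)$ for $t \leq 1$) are sound. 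Your version is arguably slightly cleaner as a one-shot pigeonhole-plus-Cauchy--Schwarz and does not need the hypothesis $tm, tn \geq 1$ anywhere; the paper's shift argument is more explicit about which quadruples are produced (consecutive blocks of the list), which makes the $t$-closeness verification a short telescoping computation rather than a staircase lemma. Both give the same $\Omega(t r_\delta^2)$ bound with the implicit constant depending only on $k$.
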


Lemma \ref{lem:prox-lower} follows from Lemma \ref{lem:prox-lower-specific} by summing over all $\delta \in \Delta(\cP_1, \cP_2)$ --- the left-hand side of \eqref{eqn:prox-lower-specific} sums to $\sfE_t(\cP_1, \cP_2)$, while on the right-hand side, we have $\sfE(\cP_1, \cP_2) = \sum_{\delta \in \Delta(\cP_1, \cP_2)} r_\delta^2$. 

\begin{proof}
    First, by Lemma \ref{lem:prox-ingredient}, we can find a list of $\ell = \Omega(r_\delta)$ pairs $(a, p) \in \cR_\delta$ in which $a_x$ and $p_y$ are both monotone, meaning that $i_1(a)$ and $i_2(p)$ are both monotone. Let this list be $(a_1, p_1)$, $(a_2, p_2)$, \ldots, $(a_\ell, p_\ell)$. We assume without loss of generality that $i_1(a)$ and $i_2(p)$ are both \emph{increasing} in this list; the proof when one is decreasing is essentially identical. 

    Let $s = \sceil{t\ell/8} - 1$ (so that $s \leq t\ell/8$, but $s + 1 \geq t\ell/8$). The main idea is to show that for most indices $1 \leq j \leq \ell - s$, both $(a_j, a_{j + s})$ and $(p_j, p_{j + s})$ are $t$-close. 
    
    First we consider $(a_j, a_{j + s})$. We have $i_1(a_{j + s}) - i_1(a_j) \geq 0$ for all $j$, and \[\sum_{j = 1}^{\ell - s} (i_1(a_{j + s}) - i_1(a_j)) = \sum_{j = \ell - s + 1}^\ell i_1(a_j) - \sum_{j = 1}^s i_1(a_j) \leq sm \leq \frac{t\ell m}{8}\] (since the initial sum telescopes). So the number of indices $j$ for which $(a_j, a_{j + s})$ is \emph{not} $t$-close, meaning that $i_1(a_{j + s}) - i_1(a_j) > tm$, is at most $\ell/8$. Similarly for $(p_j, p_{j + s})$, we have $i_2(p_{j + s}) - i_2(p_j) \geq 0$ for all $j$, and \[\sum_{j = 1}^{\ell - s} (i_2(p_{j + s}) - i_2(p_j)) = \sum_{j = \ell - s + 1}^\ell i_2(p_j) - \sum_{j = 1}^s i_2(p_j) \leq sn \leq \frac{t\ell n}{8},\] which means the number of indices $j$ for which $(p_j, p_{j + s})$ is not $t$-close is at most $\ell/8$. This means there are at least \[\ell - s - \frac{\ell}{8} - \frac{\ell}{8} \geq \frac{\ell}{2}\] indices $j$ for which both $(a_j, a_{j + s})$ and $(p_j, p_{j + s})$ are both $t$-close. 

    Now suppose that we take $j$ to be any such index and take any $j'$ with $j \leq j' \leq j + s$. Then the pairs $(a_j, a_{j'})$ and $(p_j, p_{j'})$ are also $t$-close, so the quadruple $(a_j, a_{j'}, p_j, p_{j'})$ is among those counted by the left-hand side of \eqref{eqn:prox-lower-specific}. There are at least $\ell/2$ choices for $j$ and $s + 1 \geq t\ell/8$ choices for $j'$, so this means \[\abs{\{(a, b, p, q) \in \cP_1^2 \times \cP_2^2 \mid \abs{ap} = \abs{bq} = \delta, \, \text{$(a, b)$ and $(p, q)$ are $t$-close}\}} \geq \frac{t\ell^2}{16} = \Omega(tr_\delta^2)\] (since our application of Lemma \ref{lem:prox-ingredient} guaranteed that $\ell = \Omega(r_\delta)$). 
\end{proof}

\subsection{An upper bound} \label{subsec:upper}

In this subsection, we prove the following upper bound on $\sfE_t(\cP_1, \cP_2)$. 

\begin{lemma} \label{lem:upper}
    As long as $tm, tn \geq 1$, for every $\eta > 0$, we have 
    \begin{equation}
        \sfE_t(\cP_1, \cP_2) - 4mn = O(t^{15/11}m^{12/11}n^{18/11 + \eta} + t^{4/3}m^{4/3}n^{4/3} + tm^2 + tn^2).\label{eqn:upper}
    \end{equation}
\end{lemma}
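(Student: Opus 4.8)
The plan is to run the incidence‑geometric argument of Bruner and Sharir \cite{BS18} (the one behind \eqref{eqn:bs18}), but with the proximity restriction present from the start, so that the parameter $t$ enters through the sizes of the point set and the curve family. Since $\cP_1$ lies on the $x$-axis, for a quadruple $(a, b, p, q) \in \cP_1^2 \times \cP_2^2$ the equation $\abs{ap} = \abs{bq}$ reads
\[(a_x - p_x)^2 + p_y^2 = (b_x - q_x)^2 + q_y^2,\]
which, for a fixed pair $(p, q)$, is a conic $C_{p,q}$ (a rectangular hyperbola) in the plane with coordinates $(a_x, b_x)$; expanding it to $a_x^2 - b_x^2 - 2p_x a_x + 2q_x b_x + (\abs{q}^2 - \abs{p}^2) = 0$ shows that it depends \emph{linearly} on the parameters $(p_x, q_x, \abs{q}^2 - \abs{p}^2)$. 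So I would set $\mathcal{P} = \{(a_x, b_x) : (a, b) \in \cP_1^2 \text{ is } t\text{-close}\}$ and $\Gamma = \{C_{p,q} : (p, q) \in \cP_2^2 \text{ is } t\text{-close}\}$, so that $\sfE_t(\cP_1, \cP_2)$ is the number of incidences between $\mathcal{P}$ and $\Gamma$ (once the degenerate conics are set aside). Since $\cP_1$ and $\cP_2$ both have distinct $x$-coordinates, the maps $(a, b) \mapsto (a_x, b_x)$ and (for irreducible conics) $(p, q) \mapsto C_{p,q}$ are injective, so $\abs{\mathcal{P}} = O(tm^2)$ and $\abs{\Gamma} = O(tn^2)$. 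This is the whole point of the proximity restriction: it shrinks both $\abs{\mathcal{P}}$ and $\abs{\Gamma}$ from $\Theta(m^2)$ and $\Theta(n^2)$ by a factor of about $t$.

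Before invoking an incidence bound I would peel off the degenerate quadruples, which are what the $4mn$ on the left of \eqref{eqn:upper} accounts for. The conic $C_{p,q}$ is reducible exactly when $p$ and $q$ satisfy a fixed polynomial relation, in which case $C_{p,q}$ splits into two lines of slope $\pm 1$; the quadruples to handle by hand are those for which $C_{p,q}$ is reducible and $(a_x, b_x)$ lies on it. This is where the hypotheses of Lemma \ref{lem:cleaned-up-main} do their work: Claim \ref{claim:y-spacing} and the assumption of distinct $x$-coordinates guarantee that no two points of $\cP_2$ lie on a common horizontal or vertical line, and together with the $s$-Lipschitz and $32ws$-spacing conditions this plays essentially the role of Bruner and Sharir's hypothesis that no two points of $\cP_2$ lie on a line parallel or perpendicular to $\ell_1$. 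I expect a careful count of these degenerate quadruples, using the spacing hypotheses in the spirit of Section \ref{sec:main-ingredient} (the relevant curves to be shown transverse to the strip being the ellipses along which $C_{p,q}$ degenerates), to come out to at most $4mn$.

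For the surviving incidences between $\mathcal{P}$ and the irreducible conics in $\Gamma$ — which form a family with $O(1)$ degrees of freedom, any two members meeting in at most four points — I would apply the point–curve incidence bound used in \cite{BS18} for this family, which gives, for every $\eta > 0$,
\[I(\mathcal{P}, \Gamma) = O\!\left(\abs{\mathcal{P}}^{6/11}\abs{\Gamma}^{9/11 + \eta} + \abs{\mathcal{P}}^{2/3}\abs{\Gamma}^{2/3} + \abs{\mathcal{P}} + \abs{\Gamma}\right);\]
the $\abs{\Gamma}^{\eta}$ loss (ultimately from polynomial partitioning) is the source of the $\eta$ in \eqref{eqn:upper}, and at $t = 1$ this recovers Bruner and Sharir's estimate $\sfE(\cP_1, \cP_2) - O(mn) = O(m^{12/11}n^{18/11 + \eta} + m^{4/3}n^{4/3} + m^2 + n^2)$. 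Substituting $\abs{\mathcal{P}} = O(tm^2)$ and $\abs{\Gamma} = O(tn^2)$ turns the four terms into $t^{15/11}m^{12/11}n^{18/11 + O(\eta)}$, $t^{4/3}m^{4/3}n^{4/3}$, $tm^2$, and $tn^2$, and after relabeling $\eta$ this is \eqref{eqn:upper}; here $tm, tn \geq 1$ is what ensures $\abs{\mathcal{P}} = \Theta(tm^2)$ and $\abs{\Gamma} = \Theta(tn^2)$. I expect the main obstacle to be the degenerate‑case analysis of the second paragraph — it is the only step that genuinely uses the geometry of the strip containing $\cP_2$ and the spacing hypotheses — together with checking that the incidence bound of \cite{BS18}, not originally phrased in terms of $t$‑restricted point and curve sets, can be applied to $\mathcal{P}$ and $\Gamma$ as a black box.
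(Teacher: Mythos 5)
Your proposal matches the paper's proof essentially step for step: the same incidence setup with points $(a_x, b_x)$ coming from $t$-close pairs of $\cP_1$ and curves $(x - p_x)^2 + p_y^2 = (y - q_x)^2 + q_y^2$ coming from $t$-close pairs of $\cP_2$, the same injectivity observations, the same Sharir--Zahl three-parameter-family incidence bound (which is also the bound underlying \cite{BS18}), and the same size estimates $O(tm^2)$, $O(tn^2)$ using $tm, tn \geq 1$. The one step you flag as the main obstacle is in fact immediate: the conic $C_{p,q}$ is reducible exactly when $p_y = \pm q_y$, and Claim \ref{claim:y-spacing} (distinct $y$-coordinates in $\cP_2$) gives the $4mn$ count of such quadruples directly, with no transversality or ellipse argument needed.
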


We will prove Lemma \ref{lem:upper} using an incidence bound for algebraic curves due to Sharir and Zahl \cite{SZ17}. We first give a few definitions needed to state this bound: 
\begin{itemize}
    \item For a set of points $\Pi$ and a set of curves $\Gamma$ in $\RR^2$, an \deftext{incidence} between $\Pi$ and $\Gamma$ is a pair $(p, \gamma) \in \Pi \times \Gamma$ where $p$ lies on $\gamma$. We denote the number of incidences between $\Pi$ and $\Gamma$ by \[\cI(\Pi, \Gamma) = \abs{\{(p, \gamma) \in \Pi \times \Gamma \mid p \in \gamma\}}.\] 
    \item A polynomial $g \in \RR[x, y]$ is \deftext{irreducible} if it is not possible to write $g = g_1g_2$ for nonconstant polynomials $g_1, g_2 \in \RR[x, y]$. 
    \item As in Subsection \ref{subsec:nonlinear-strips}, we write $\cZ(g) = \{(x, y) \in \RR^2 \mid g(x, y) = 0\}$. An \deftext{algebraic curve} is a nonempty set $\gamma \subseteq \RR^2$ such that $\gamma = \cZ(g)$ for some nonconstant $g \in \RR[x, y]$. An algebraic curve $\gamma$ is \deftext{irreducible} if we can write $\gamma = \cZ(g)$ for some irreducible $g$.
    \item We say a set of curves $\cC$ is a \deftext{$3$-parameter family} if all the curves in $\cC$ are of the form $\cZ(g)$ for polynomials $g \in \RR[x, y]$ whose coefficients are themselves polynomials in $3$ parameters. More formally, this means there is a polynomial $G \in \RR[x, y, \alpha_1, \alpha_2, \alpha_3]$ such that letting $g_{\alpha_1, \alpha_2, \alpha_3} \in \RR[x, y]$ be the polynomial given by \[g_{\alpha_1, \alpha_2, \alpha_3}(x, y) = G(x, y, \alpha_1, \alpha_2, \alpha_3)\] (for any fixed $\alpha_1, \alpha_2, \alpha_3 \in \RR$), we have \[\cC \subseteq \{\cZ(g_{\alpha_1, \alpha_2, \alpha_3}) \mid \alpha_1, \alpha_2, \alpha_3 \in \RR\}.\] We define the \deftext{degree} of $\cC$ as $\deg G$ (more precisely, the minimum value of $\deg G$ over all $G$ which could be used to define $\cC$). For example, the collection of circles in $\RR^2$ is a $3$-parameter family of degree $2$, corresponding to \[G(x, y, \alpha_1, \alpha_2, \alpha_3) = (x - \alpha_1)^2 + (y - \alpha_2)^2 - \alpha_3^2.\]
\end{itemize}
Then the statement of the bound is as follows. (Sharir and Zahl work in a more general setting, but the setting we have described here is easier to define and is enough for our purposes.)

\begin{theorem}[Sharir--Zahl] \label{thm:sz17}
    Let $\Pi$ be a set of points in $\RR^2$, and let $\Gamma$ be a set of irreducible algebraic curves in $\RR^2$ from a $3$-parameter family of bounded degree. Then for all $\eta > 0$, we have \[\cI(\Pi, \Gamma) = O(\abs{\Pi}^{6/11}\abs{\Gamma}^{9/11 + \eta} + \abs{\Pi}^{2/3}\abs{\Gamma}^{2/3} + \abs{\Pi} + \abs{\Gamma}).\] 
\end{theorem}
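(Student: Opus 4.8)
The plan is to reduce the general three-parameter family to the theory of \emph{pseudo-segments} (collections of Jordan arcs that pairwise cross at most once), following the strategy Aronov and Sharir used for circles --- circles being the special case $G(x, y, \alpha_1, \alpha_2, \alpha_3) = (x - \alpha_1)^2 + (y - \alpha_2)^2 - \alpha_3^2$. Write $m = \abs{\Pi}$ and $n = \abs{\Gamma}$. The first and hardest step is an algebraic \emph{cutting lemma}: the $n$ curves can be partitioned into $O(n^{3/2 + \eta})$ sub-arcs, any two of which meet in at most one point. Two structural facts drive this. First, by Bézout's theorem any two distinct curves of the family meet in $O(1)$ points --- they share no component, being irreducible, and bounded degree caps the intersection number. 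Second, the three-parameter hypothesis together with bounded degree forces that only $O(1)$ curves of the family pass through any three points in sufficiently general position, so a curve is essentially pinned down by three of its points. These facts let one bound the number of \emph{lenses} of the arrangement of $\Gamma$ --- minimal bounded faces cut off by two arcs that cross at both endpoints --- by $O(n^{3/2 + \eta})$, either via a Tamaki--Tokuyama-style charging scheme (charge each lens to a triple of its points, the multiplicity being bounded by the three-parameter condition) or via a polynomial partition in the parameter space of the family; then a greedy argument that repeatedly cuts the most lens-heavy curve turns $\Gamma$ into the claimed family of pseudo-segment arcs, the number of cuts being comparable to the lens count.

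Given a family $\Gamma'$ of $\xi$ pseudo-segment arcs, I would next prove a Szemer\'edi--Trotter-type bound $\cI(\Pi, \Gamma') = O(m^{2/3} n^{2/3} + m + \xi)$ by Sz\'ekely's crossing-number method: form the graph on $\Pi$ joining each consecutive pair of points along each arc. This graph is \emph{simple} exactly because the arcs are pseudo-segments (two arcs sharing two points would cross twice), so it has at least $\cI(\Pi, \Gamma') - \xi$ edges; and its crossing number is $O(n^2)$, since every crossing arises from an intersection of two of the original curves and there are $O(1)$ of those per pair. The crossing lemma then yields the bound. Feeding in $\xi = O(n^{3/2 + \eta})$ already gives $\cI(\Pi, \Gamma) = O(m^{2/3} n^{2/3} + m + n^{3/2 + \eta})$ --- correct except for the spurious term $n^{3/2 + \eta}$, which is genuinely worse than the target bound once $m$ is not too large.

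To delete $n^{3/2 + \eta}$ and reach the sharp bound --- in particular the "circle exponent'' $m^{6/11}n^{9/11 + \eta}$ --- I would run the incidence-bound recursion of Clarkson--Edelsbrunner--Guibas--Sharir--Welzl, in the refined form used by Aronov--Sharir, with the cutting lemma as its combinatorial input: decompose the plane by $(1/r)$-cuttings or a polynomial partition of the points, recurse inside cells that now carry few points, handle the configuration on the partition variety using that a bounded-degree curve meets it in $O(r)$ points while only $O(r)$ curves can lie inside it, and optimize the partition degree. The exponent $6/11$ emerges as the fixed point of this recursion once "number of pieces after cutting'' is $O(n^{3/2 + \eta})$ instead of $n$, and the $+\eta$ absorbs the logarithmic and partitioning losses; the remaining terms $m^{2/3}n^{2/3}$, $m$, $n$ appear as the base-case and boundary contributions.

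I expect the cutting lemma to be the main obstacle: converting the qualitative hypotheses (irreducible, bounded degree, three parameters) into the quantitative lens bound $O(n^{3/2 + \eta})$ is where all the algebraic geometry lives, and it requires controlling degenerate configurations --- many curves through non-generic (e.g.\ collinear) point sets, tangencies versus transversal crossings, and reducibility or boundary effects in the parameter space. Getting the lens exponent down to $3/2$ rather than a weaker power is precisely what makes the final incidence exponent match the circle case; a secondary but nontrivial point is checking that the parameter optimization in the last step really produces exactly the four listed terms and no others.
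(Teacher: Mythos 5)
This statement is not proved in the paper at all: Theorem \ref{thm:sz17} is imported verbatim from Sharir and Zahl \cite{SZ17} and used as a black box in the proof of Lemma \ref{lem:upper}, so there is no internal proof to compare your argument against. What you have written is a sketch of the proof of the cited result itself, and at the level of architecture it does track how that result is actually established in the literature: cut the curves into pseudo-segments, apply Sz\'ekely-style crossing-number bounds to the pseudo-segment arrangement, and recover the exponents $6/11$ and $9/11$ from a Clarkson--Edelsbrunner--Guibas--Sharir--Welzl/Aronov--Sharir type recursion in which the number of cuts replaces the number of curves.

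The genuine gap is exactly the step you yourself flag as the main obstacle: the cutting lemma asserting that $n$ bounded-degree curves can be cut into $O(n^{3/2+\eta})$ arcs that pairwise cross at most once. This is not a technical verification one can wave at --- it is the main theorem of Sharir and Zahl's paper, and the mechanism you propose for it does not suffice. A Tamaki--Tokuyama-style charging of lenses to triples of points is known to give the $n^{3/2}$-type lens bound only for special families (pseudo-parabolas, and, with substantial extra work by Aronov--Sharir and Agarwal--Aronov--Sharir, circles and pseudo-circles); for general bounded-degree algebraic curves the counting of ``how many curves pass through three points'' degenerates badly (tangencies, curves through non-generic triples, families within the family), and Sharir--Zahl instead prove the cutting bound by a different, genuinely algebraic route: they lift pairs of curves so that lenses become incidences/intersections with bounded-degree surfaces in $\RR^3$ and control these by polynomial partitioning there. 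Your mention of ``a polynomial partition in the parameter space'' points in roughly the right direction but is not an argument. Also note a smaller mismatch: the $n^{3/2+\eta}$ cutting bound in \cite{SZ17} needs only that the curves have bounded degree and no common components, not the $3$-parameter hypothesis; the three degrees of freedom enter only in the second, incidence-theoretic stage. So as a proof your proposal is incomplete at its central step; as a guide to the literature behind the quoted theorem it is essentially accurate, and for the purposes of this paper nothing more than the citation is required.
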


\begin{proof}[Proof of Lemma \ref{lem:upper}]
    We first separately account for the contribution to $\sfE_t(\cP_1, \cP_2)$ from quadruples $(a, b, p, q)$ with $p_y = \pm q_y$. By Claim \ref{claim:y-spacing}, there is at most one point in $\cP_2$ with any given $y$-coordinate. So quadruples $(a, b, p, q)$ with $p_y = \pm q_y$ have a total contribution of at most $4mn$ to $\sfE_t(\cP_1, \cP_2)$: There are $n$ ways to choose $p$ and $m$ ways to choose $a$; then there are at most $2$ ways to choose $q$ such that $q_y = \pm p_y$; and finally, there are at most $2$ ways to choose $b$ such that $\abs{ap} = \abs{bq}$. 

    We will now bound the contribution of the remaining pairs $(a, b, p, q)$ by the right-hand side of \eqref{eqn:upper}, using the Sharir--Zahl incidence bound (Theorem \ref{thm:sz17}). First, we can write the condition $\abs{ap} = \abs{bq}$ as \[(a_x - p_x)^2 + p_y^2 = (b_x - q_x)^2 + q_y^2.\] Then we can set up an incidence problem in $\RR^2$ by using each possible pair $(a, b)$ to define a point and each $(p, q)$ to define a curve: We define the set of points \[\Pi = \{(a_x, b_x) \mid \text{$(a, b) \in \cP_1^2$ is $t$-close}\}.\] We define a set of curves $\Gamma$ as follows: For each $t$-close pair $(p, q) \in \cP_2^2$ with $p_y \neq \pm q_y$, we include the curve defined by \[(x - p_x)^2 + p_y^2 = (y - q_x)^2 + q_y^2,\] where $x$ and $y$ are the variables used to define the curve, and $p_x$, $p_y$, $q_x$, and $q_y$ are constants. Then every pair $(a, b, p, q)$ with $p_y \neq \pm q_y$ which contributes to $\sfE_t(\cP_1, \cP_2)$ corresponds to an incidence between $\Pi$ and $\Gamma$. 

    Note that the condition $p_y \neq \pm q_y$ ensures that all curves in $\Gamma$ are irreducible. Also, the curves corresponding to different pairs $(p, q)$ are distinct: Given some curve $\gamma \in \Gamma$, we can recover $p_x$ from the coefficient of $x$ and $q_x$ from the coefficient of $y$, and since no two points in $\cP_2$ have the same $x$-coordinate by assumption, this means we can recover $p$ and $q$. 

    Finally, $\abs{\Pi}$ is the number of $t$-close pairs $(a, b) \in \cP_1^2$, which is at most $3tm^2$ (there are $m$ choices for $a$, and each corresponds to at most $2tm + 1 \leq 3tm$ choices for $b$). Similarly, $\abs{\Gamma}$ is at most the number of $t$-close pairs $(p, q) \in \cP_2^2$, which is at most $3tn^2$. So applying Theorem \ref{thm:sz17} to $\Pi$ and $\Gamma$ gives that for every $\eta > 0$ we have \[\cI(\Pi, \Gamma) = O(t^{15/11}m^{12/11}n^{18/11 + \eta} + t^{4/3}m^{4/3}n^{4/3} + tm^2 + tn^2).\qedhere\]
\end{proof}

\subsection{Final computations} \label{subsec:computations}

In this subsection, we combine Lemmas \ref{lem:prox-lower} and \ref{lem:upper} to prove the following upper bound on $\sfE(\cP_1, \cP_2)$.

\begin{lemma} \label{lem:final}
    For every $\eps > 0$, we have \[\sfE(\cP_1, \cP_2) = O(m^{16/15}n^{22/15 + \eps} + m^{5/4}n^{5/4} + n^2 + m^2).\]  
\end{lemma}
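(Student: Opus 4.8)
The plan is to combine the lower bound from Lemma~\ref{lem:prox-lower}, which says $\sfE_t(\cP_1, \cP_2) = \Omega(t \cdot \sfE(\cP_1, \cP_2))$, with the upper bound from Lemma~\ref{lem:upper}, which says $\sfE_t(\cP_1, \cP_2) - 4mn = O(t^{15/11}m^{12/11}n^{18/11 + \eta} + t^{4/3}m^{4/3}n^{4/3} + tm^2 + tn^2)$, and then to choose $t$ optimally. Chaining the two bounds gives
\[t \cdot \sfE(\cP_1, \cP_2) = O(mn + t^{15/11}m^{12/11}n^{18/11+\eta} + t^{4/3}m^{4/3}n^{4/3} + tm^2 + tn^2),\]
so that
\[\sfE(\cP_1, \cP_2) = O\!\left(\frac{mn}{t} + t^{4/11}m^{12/11}n^{18/11+\eta} + t^{1/3}m^{4/3}n^{4/3} + m^2 + n^2\right).\]
The last two terms are independent of $t$, so the game is to pick $t \in [0,1]$ (also respecting the hypothesis $tm, tn \geq 1$ of Lemma~\ref{lem:upper}) to balance the $mn/t$ term against the other two $t$-dependent terms. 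Balancing $mn/t$ against $t^{4/11}m^{12/11}n^{18/11+\eta}$ gives $t^{15/11} = m^{-1/11}n^{-7/11-\eta}$, i.e. $t = m^{-1/15}n^{-7/15 - 11\eta/15}$, which yields a contribution of order $m^{16/15}n^{22/15 + \eta'}$ for a suitably rescaled $\eta'$ (absorbing the $11\eta/15$ into the $\eps$ in the statement). Balancing $mn/t$ against $t^{1/3}m^{4/3}n^{4/3}$ gives $t^{4/3} = m^{-1/3}n^{-1/3}$, i.e. $t = m^{-1/4}n^{-1/4}$, yielding a contribution of order $m^{5/4}n^{5/4}$.

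The cleanest way to organize this: rather than making a single global choice of $t$, I would take $t$ to be the minimum of the two balancing values above, capped at $1$ (and I need to check that this $t$ still satisfies $tm, tn \geq 1$, or else handle the boundary case separately). Then I would argue that with this choice, $mn/t$ is dominated by $\max\{m^{16/15}n^{22/15+\eps}, m^{5/4}n^{5/4}\}$ up to constants, and check term by term that each of the other summands $t^{4/11}m^{12/11}n^{18/11+\eta}$ and $t^{1/3}m^{4/3}n^{4/3}$ is also dominated by the same maximum (this follows because $t$ is at most each balancing value, so, e.g., $t^{4/11}m^{12/11}n^{18/11+\eta} \leq (mn/t)$ when $t$ is at most the first balancing value, etc.). Collecting everything then gives
\[\sfE(\cP_1, \cP_2) = O(m^{16/15}n^{22/15+\eps} + m^{5/4}n^{5/4} + m^2 + n^2),\]
as claimed, after renaming $\eta$ in terms of $\eps$.

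One subtlety to dispatch carefully is the constraint $tm, tn \geq 1$ required by Lemma~\ref{lem:upper}. If the optimal $t$ computed above is less than $1/m$ or $1/n$, then $m$ or $n$ is so large relative to the other that one of the terms $m^2$ or $n^2$ already dominates $m^2 n^2 / |\Delta|$-type considerations; concretely, if the balancing value of $t$ forces $t < 1/n$ (say $m \leq n$), one checks directly that $n^2$ is already an upper bound for $\sfE(\cP_1, \cP_2)$ since trivially $\sfE(\cP_1, \cP_2) = \sum r_\delta^2 \leq m^2 \sum_\delta 1 \cdot$\ldots — actually the cleanest trivial bounds are $\sfE(\cP_1, \cP_2) \leq m^2 n \cdot \max_\delta r_\delta / \ldots$; more simply, $\sfE(\cP_1,\cP_2) \le (mn)^2$ always, and when the $t$-free terms $m^2$, $n^2$ already exceed the relevant regime the bound is immediate. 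I would handle this by a short case analysis at the start: if $m^2 \geq m^{16/15}n^{22/15+\eps}$ or $n^2 \geq m^{16/15}n^{22/15+\eps}$ (roughly, if $m$ and $n$ are very unbalanced) the desired bound follows from a crude estimate; otherwise $m$ and $n$ are within polynomial factors of each other, the optimal $t$ lies safely in the valid range, and the balancing argument above applies.

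The main obstacle, then, is not any single hard inequality but rather the bookkeeping: verifying that the single choice $t = \min\{m^{-1/15}n^{-7/15-O(\eta)},\, m^{-1/4}n^{-1/4},\, 1\}$ simultaneously controls all four $t$-dependent terms against the target maximum, and carefully tracking how the $\eta$ from Theorem~\ref{thm:sz17} (and Lemma~\ref{lem:upper}) turns into the $\eps$ in the conclusion — one wants $\eta = \eta(\eps)$ small enough that $11\eta/15 < \eps$ with room to spare for the constants. None of this is deep, but it must be done without sign or exponent errors, and the degenerate regimes where $tm < 1$ or $tn < 1$ must be explicitly excluded or resolved.
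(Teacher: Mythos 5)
Your main balancing argument is sound and runs parallel to the paper's: chaining Lemma \ref{lem:prox-lower} with Lemma \ref{lem:upper}, dividing by $t$, and choosing $t$ so that the $mn/t$ term (equivalently, the paper's requirement $t \gtrsim mn/\sfE(\cP_1,\cP_2)$, which makes the $4mn$ term absorbable) is balanced against the two $t$-dependent incidence terms. Your explicit choice $t = \min\{m^{-1/15}n^{-7/15-11\eta/15},\, m^{-1/4}n^{-1/4},\, 1\}$, versus the paper's self-referential $t = \max\{8mn/(c\cdot\sfE(\cP_1,\cP_2)),\, 1/m\}$ followed by casework on which term dominates, are interchangeable in the main regime, and your exponent computations ($m^{16/15}n^{22/15+11\eta/15}$ and $m^{5/4}n^{5/4}$) are correct.

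The genuine gap is in the degenerate regime where your balancing value drops below $1/m$, i.e.\ roughly $m \lesssim n^{1/2}$. You claim this case ``follows from a crude estimate,'' but it does not: the only crude bounds available are $\sfE(\cP_1,\cP_2) \leq 2mn^2$ (fixing $p,q,a$ leaves at most two choices of $b$) and $\sfE(\cP_1,\cP_2) \leq m^2n^2$, and at, say, $m \approx n^{1/2}$ these give $\approx n^{5/2}$, far above the claimed bound $O(n^2 + m^{16/15}n^{22/15+\eps}) \approx n^2$. (The opposite imbalance, $m$ enormously larger than $n$, \emph{is} handled by $2mn^2 \leq 2m^2$, but that is not the problematic side.) To close this you must still invoke Lemma \ref{lem:upper} at the smallest admissible value $t = 1/m$, which yields $\sfE(\cP_1,\cP_2) = O(m^2 n + m^{8/11}n^{18/11+\eta} + mn^{4/3} + m^2 + n^2)$, and then verify---exactly as in the paper's Case 2---that the term $m^{8/11}n^{18/11+\eta}$ is dominated by $n^2$ when $m \leq n^{1/2 - 2\eta}$ and by $m^{16/15}n^{22/15+\eps}$ when $m \geq n^{1/2-2\eta}$ (with $\eta = \eps/2$), and similarly that $m^2 n$ and $mn^{4/3}$ are absorbed near the threshold $m \approx n^{1/2}$. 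So the missing piece is not bookkeeping of the main balance but this extra incidence-based case split at $t = 1/m$; also note the constraint $tn \geq 1$ is the binding one only when $m \gg n$ (the paper gets it for free from $\sfE(\cP_1,\cP_2) \leq 2mn^2$), whereas your sketch conflates the two constraints (``$t < 1/n$ (say $m \leq n$)'').
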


\begin{proof}
    We first choose a value of $t$. To do so, let $c$ be the implicit constant in Lemma \ref{lem:prox-lower}, so that Lemma \ref{lem:prox-lower} gives $\sfE_t(\cP_1, \cP_2) \geq ct\cdot \sfE(\cP_1, \cP_2)$; we can assume without loss of generality that $c \leq 1$. We then set \[t = \max\left\{\frac{8mn}{c \cdot \sfE(\cP_1, \cP_2)}, \frac{1}{m}\right\}.\] We can check that this value of $t$ is `reasonable' in the following ways:
    \begin{itemize}
        \item If $t > 1$, then we immediately get $\sfE(\cP_1, \cP_2) = O(mn) = O(m^2 + n^2)$, and we are done. So we can assume $t \leq 1$.
        \item We have $tm \geq 1$ by definition. Meanwhile, we must have $\sfE(\cP_1, \cP_2) \leq 2mn^2$ --- if we want to choose a quadruple $(a, b, p, q) \in \cP_1^2 \times \cP_2^2$ with $\abs{ap} = \abs{bq}$, there are $n$ choices for each of $p$ and $q$ and $m$ choices for $a$; then there are at most $2$ choices for $b$ (as $b$ must lie on the $x$-axis and be a specified distance from $q$). This ensures \[tn \geq \frac{8mn^2}{c \cdot \sfE(\cP_1, \cP_2)} \geq 1.\] 
        \item Finally, by Lemma \ref{lem:prox-lower} we have $\sfE_t(\cP_1, \cP_2) \geq ct \cdot \sfE(\cP_1, \cP_2) \geq 8mn$, so \[\sfE_t(\cP_1, \cP_2) - 4mn \geq \frac{\sfE_t(\cP_1, \cP_2)}{2}.\] (We need this in order for the bound on $\sfE_t(\cP_1, \cP_2) - 4mn$ from Lemma \ref{lem:upper} to be useful.)
    \end{itemize}
    Now set $\eta = \eps/2$. Then combining the lower and upper bounds on $\sfE_t(\cP_1, \cP_2) - 4mn$ from Lemmas \ref{lem:prox-lower} and \ref{lem:upper}, we get \[t \cdot \sfE(\cP_1, \cP_2) = O(t^{15/11}m^{12/11}n^{18/11 + \eta} + t^{4/3}m^{4/3}n^{4/3} + tm^2 + tn^2),\] and dividing by $t$ gives \[\sfE(\cP_1, \cP_2) = O(t^{4/11}m^{12/11}n^{18/11 + \eta} + t^{1/3}m^{4/3}n^{4/3} + m^2 + n^2).\] We now perform casework on which of the four terms on the right-hand side is largest. If the third or fourth terms are largest, then we get $\sfE(\cP_1, \cP_2) = O(m^2)$ or $\sfE(\cP_1, \cP_2) = O(n^2)$, and we are done; so it remains to consider the cases where the first and second terms are largest. 
    
    \case{1}{The first term is largest and $t = 8mn/(c \cdot \sfE(\cP_1, \cP_2))$} Then we get \[\sfE(\cP_1, \cP_2) = O(t^{4/11}m^{12/11}n^{18/11 + \eta}) = O\left(\frac{m^{16/11}n^{2 + \eta}}{\sfE(\cP_1, \cP_2)^{4/11}}\right).\] Multiplying both sides by $\sfE(\cP_1, \cP_2)^{4/11}$ and raising them to the $(11/15)$th power gives \[\sfE(\cP_1, \cP_2) = O(m^{16/15}n^{22/15 + 11\eta/15}),\] which is at most  the first term in Lemma \ref{lem:final} (our choice of $\eta$ satisfies $11\eta/15 \leq \eps$). 
    
    \case{2}{The first term is largest and $t = 1/m$} Then we get 
    \begin{equation}
        \sfE(\cP_1, \cP_2) = O(t^{4/11}m^{12/11}n^{18/11 + \eta}) = O(m^{8/11}n^{18/11 + \eta}). \label{eqn:strange-bound}
    \end{equation} 
    We will show that the right-hand side must be bounded by one of the terms in Lemma \ref{lem:final}. First, if $m \leq n^{1/2 - 2\eta}$, then we have \[m^{8/11}n^{18/11 + \eta} \leq n^{4/11 - 16\eta/11 + 18/11 + \eta} \leq n^2.\] On the other hand, we claim that if $m \geq n^{1/2 - 2\eta}$, then it is bounded by the first term of Lemma \ref{lem:final}. To see this, we can write \[\frac{m^{8/11}n^{18/11 + \eta}}{m^{16/15}n^{22/15 + \eps}} = m^{-56/165}n^{28/165 + \eta - \eps} \leq n^{-28/165 + 112\eta/165 + 28/165 + \eta - \eps} \leq 1\] (since we chose $\eta = \eps/2$). So in either case, Lemma \ref{lem:final} holds. 

    \case{3}{The second term is largest and $t = 8mn/(c \cdot \sfE(\cP_1, \cP_2))$} Then we get \[\sfE(\cP_1, \cP_2) = O(t^{1/3}m^{4/3}n^{4/3}) = O\left(\frac{m^{5/3}n^{5/3}}{\sfE(\cP_1, \cP_2)^{1/3}}\right).\] Moving $\sfE(\cP_1, \cP_2)^{1/3}$ to the left-hand side and raising both sides to the $(3/4)$th power gives \[\sfE(\cP_1, \cP_2) = O(m^{5/4}n^{5/4}),\] which is the second term in Lemma \ref{lem:final}. 
    
    \case{4}{The second term is largest and $t = 1/m$} Then we get \[\sfE(\cP_1, \cP_2) = O(t^{1/3}m^{4/3}n^{4/3}) = O(mn^{4/3}).\] But in order to have $t = 1/m$, we must have $1/m \geq 8mn/(c \cdot \sfE(\cP_1, \cP_2))$, so this means \[m \leq \frac{c \cdot \sfE(\cP_1, \cP_2)}{8mn} = O(n^{1/3}).\] Then we have $mn^{4/3} = O(n^2)$, so Lemma \ref{lem:final} still holds in this case. 
    
    This means Lemma \ref{lem:final} is true in all possible cases, so we are done. 
\end{proof}

Finally, combining Lemma \ref{lem:final} with \eqref{eqn:cauchy--schwarz} gives that \[\abs{\Delta(\cP_1, \cP_2)} \geq \frac{m^2n^2}{\sfE(\cP_1, \cP_2)} = \Omega(\min\{m^{14/15}n^{8/15 - \eps}, m^{3/4}n^{3/4}, m^2, n^2\}),\] completing the proof of Lemma \ref{lem:cleaned-up-main}. 

\section*{Acknowledgements}

This project was conducted as part of the 2024 NYC Discrete Math REU, funded by NSF awards DMS-2051026 and DMS-2349366 and by Jane Street.

\bibliography{refs}{}
\bibliographystyle{plain}

\end{document}